\newcommand{\FCP}{\mathbf{FCP}}
\newcommand{\algo}{\mathbf{Algorithm}}
\newcommand{\Cut}{\mathbf{Cut}}
\newcommand{\et}{\eta_{\mathrm{conc}}}
\newcommand{\Mc}{M_{\FCP}}
\newcommand{\MF}{M_{\Cut}}
\newcommand{\Nc}{N_{\FCP}}
\newcommand{\NF}{N_{\Cut}}
\newtheorem{theorem}{Theorem}[section]
\newtheorem{assumption}{Assumption}
\newtheorem{proposition}[theorem]{Proposition}
\newtheorem{corollary}[theorem]{Corollary}
\newtheorem{lemma}[theorem]{Lemma}
\newcommand{\alex}[1]{\textcolor{red}{[alex: #1]}}
\title{A simple and improved algorithm for noisy, convex, zeroth-order optimisation}
\author{Alexandra Carpentier\\
Institut f\"ur Mathematik, Universit\"at Potsdam}
\date{June 2024}
\begin{document}

\maketitle

\begin{abstract}
    In this paper, we study the problem of noisy, convex, zeroth order optimisation of a function $f$ over a bounded convex set $\bar{\mathcal X}\subset \mathbb{R}^d$. Given a budget $n$ of noisy queries to the function $f$ that can be allocated sequentially and adaptively, our aim is to construct an algorithm that returns a point $\hat x\in \bar{\mathcal X}$ such that $f(\hat x)$ is as small as possible.  We provide a conceptually simple method inspired by the textbook center of gravity method, but adapted to the noisy and zeroth order setting. We prove that this method is such that the $f(\hat x) - \min_{x\in \bar{\mathcal X}} f(x)$ is of smaller order than $d^2/\sqrt{n}$ up to poly-logarithmic terms. We slightly improve upon existing literature, where to the best of our knowledge the best known rate is in~\cite{lattimore2020improved} is of order $d^{2.5}/\sqrt{n}$, albeit for a more challenging problem. Our main contribution is however conceptual, as we believe that our algorithm and its analysis bring novel ideas and are significantly simpler than existing approaches. 
\end{abstract}

\section{Introduction}

We consider in this paper the setting of convex noisy zeroth-order optimisation. For $d\geq 1$, consider a bounded convex set $\bar{\mathcal X} \subset \mathbb R^d$ with non-zero volume, and consider a convex function $f:\bar{\mathcal X} \rightarrow [0,1]$. 

We consider a sequential setting with fixed horizon $n \in \mathbb N^*$. At each time $t \leq n$, the learner chooses a point $x_t\in \bar{\mathcal X}$ and observes a noisy observation $y_t\in [0,1]$ that is such that $\mathbb E[y_t|x_t = x] = f(x)$, and such that $y_t$ knowing $x_t$ is independent of the past observations.

In this work, we will study the problem of optimising the function $f$ in the sequential game described above, namely after the budget $n$ has been fully used by the learner, she has to predict a point $\hat x$ - based on all her observations $(x_t, y_t)_{t\leq n}$ - and her aim will be to estimate the minimum for the function $f$. Her performance for this task will be measured through the following (simple) \textit{regret}
$$f(\hat x) - \inf_{x\in \bar{\mathcal X}} f(x),$$
namely the difference between the true infimum of $f$, and $f$ evaluated at $\hat x$.

This setting, known as convex noisy zeroth-order optimisation, is related to two popular settings: first-order optimisation - where the learner has access to noisy evaluations of the (sub-)gradient of $f$ - and noiseless zeroth-order optimisation - where the noise $\epsilon_t$ is equal to $0$. We refer the reader to~\cite{nemirovskij1983problem, bubeck2015convex, hazan2016introduction, bertsekas2011incremental,lattimore2024bandit}, among others, for books and surveys on these topics. Unfortunately, a naive application of methods crafted for the two aforementioned topics to the problem of noisy zeroth-order optimisation typically provides poor results, as the noise present in the evaluations of the function perturbs significantly the learning process, and e.g.~makes attempts of computing (sub-)gradients of $f$ difficult - see e.g.~\cite{agarwal2011stochastic} for a precise discussion on this topic.

In the case where $d=1$, optimal algorithms however exist for this problem since a long time - see~\cite{nemirovskij1983problem} for a survey - and are related to dichotomic search. The optimal regret in this case is of order $n^{-1/2}$ up to polylogarithmic terms. An important question was then to extend this to the higher dimensional case; and while it is relatively simple to craft algorithms whose regret decays, up to logarithmic terms, as $\exp(cd)n^{-1/2}$ where $c>0$ is an universal constant, an important question that remained open for a long time was on whether the minimax regret was exponential with the dimension or not. A first ground-breaking work in this topic is to be found in~\cite[Chaper 9]{nemirovskij1983problem}, where they provide a complex algorithm whose regret can be bounded uniformly, with high probability, as $\frac{\mathrm{poly}(d)}{\sqrt{n}}$, proving that it is possible to have an algorithm whose regret depends actually only polynomially on $d$. This gave rise to a sequence of works, mostly in the related, more challenging setting where one aims at minimising the cumulative regret\footnote{In these works, the aim is to minimise the sum of collected samples - i.e.~sample as often as possible close to the minimum. They also often consider the challenging adversarial setting. Note that upper bounds in this setting morally yield upper bounds for our simper setting.} - see e.g.~\cite{agarwal2011stochastic,bubeck2017kernel, lattimore2020improved}. The exponent of the polynomial in $d$ has been successively reduced through this stream of works. The most actual algorithm and bound - to the best of our knowledge - is in~\cite{lattimore2020improved}, and for a more challenging problem (cumulative regret, adversarial setting). However their results would translate in our setting in a regret of order (up to logarithmic terms) $\frac{d^{2.5}}{\sqrt{n}}$. 
This has to be compared to the best lower bound, derived for this problem, which is of order $\frac{d}{\sqrt{n}}$, and which is proven over the smaller class of linear functions~\cite{shamir2013complexity}. This highlights the fact that a gap remains in this setting. In parallel, another stream of literature has been devoted to studying the effect of additional shape constraints, in particular strong convexity and smoothness - see e.g.~\cite{jamieson2012query, fokkema2024online} - under which the faster regret of order $\frac{d^{1.5}}{\sqrt{n}}$, is achievable. Note however that strong convexity is a very strong assumption that has important consequences - in particular, when combined to a smoothness assumption, it essentially implies that the shape of the level sets of $f$ is close to a ball. To complement this short litterature review, see rather~\cite{lattimore2024bandit} for an excellent very recent survey on these topics - see in particular~\cite[Section 2.3]{lattimore2024bandit} for a recent overview of the state of the art in these problems.

In this paper, we provide a simple algorithm for the problem described above - solely under the additional assumption that the minimum of $f$ on $\bar{\mathcal{X}}$ is not too close to the border of $\bar{\mathcal{X}}$. We prove that with high probability and up to polylogarithmic terms depending on the probability, the budget, the dimension and the diameter of $\bar{\mathcal{X}}$, the regret is uniformly bounded as $\frac{d^2}{\sqrt{n}}$.
This slightly improves over the best known bound for this problem\footnote{Yet does not answer the open question on what is the minimax rate in this setting}. The main strength of this work, though, is the conceptual simplicity of the proposed algorithm, which contrasts with the complexity of existing approaches, and also its simple analysis. Indeed, our algorithm is an adaptation of the textbook center of gravity method~\cite{ayu1965algorithm, newman1965location}, namely a specific kind of dichotomic search, combined with an estimator of the gradient on a well-chosen proxy of $f$, at a well chosen point. In Section~\ref{sec:prel}, we present additional notations, as well as some preliminary results regarding these proxies of $f$, and also on estimating their values and gradients. In Section~\ref{sec:alg}, we provide the main algorithm, and the upper bound on its regret. All proofs are in the appendix, and are significantly commented for clarity.



\section{Preliminary results and notations}\label{sec:prel}

 Write $(e_1,\ldots, e_d)$ for the canonical basis of $\mathbb R^d$. Write also for any Borelian set $\mathcal S \subset \mathbb R^d$, $\mathrm{vol}(\mathcal S)$ for the volume of this set (i.e.~its measure according to the Lebesgues measure), and $\mathrm{conv}(\mathcal S)$ for its convex hull. Let $p\geq 1$, for $R \geq 0$ and $x\in \mathbb R^d$, write $\mathbb B_p(x,R)$ for the $d-$th dimensional $l_p$ ball of radius $R$ and center $x$. We also write $\mathbb B_2(R) = \mathbb B_2(0,R)$, and $\mathbb S_2(R)$ for the $l_2$ sphere of center $0$ and radius $R$. For technical reasons, we will extend the definition of $f$ over $\mathbb R^d$, and write that for $x \not\in\bar{\mathcal X}$, $f(x) = +\infty$ - and we state by convention that when we sample a point $x_t \not\in\bar{\mathcal X}$, we obtain $y_t = +\infty$. We will say by convention that $f$ is convex on $\mathbb R^d$, as it is convex on $\bar{\mathcal X}$, and prolongated by $+\infty$ outside of $\bar{\mathcal X}$. We also state the following mild assumption on the function $f$, which implies that the minimum of $f$ cannot be too close to the border of $\bar{\mathcal X}$.





\begin{assumption}\label{ass:bound}

    Let $x^*$ be a minimum of $f$ on $\bar{\mathcal X}$ and write $f(x^*) = f^*$. Assume that $x^*$ is such that there exists $r >0$ such that $\mathbb B_2(x^*,r) \subset \bar{\mathcal X}$.

\end{assumption}

In what follows, we will consider some well-chosen proxies of $f$ which we will use in our algorithm. These proxies will be such that one can estimate in a "natural" way these proxies, as well as their gradients. We will study conditions under which these proxies have good properties. We follow here the natural idea - introduced in~\cite{bach2016highly} to the best of our knowledge for zeroth-order optimisation, and studied more generally in~\cite{akhavan2022gradient} - of considering a proxy of $f$ through smoothing in a neighborhood around each point. We will however adapt this neighborhood to some ambient convex set, as discussed below - and this adaptation is key for our algorithm later. In what follows, we first describe the proxies of $f$ that we will consider, and provide a condition under which the gradients of these proxies are informative regarding $f$ itself. We then explain how we can estimate these proxies and their gradient through noisy evaluations of $f$.

\subsection{Smoothed functional notations and results on smoothed convex functions}

Consider a convex subspace $\mathcal X \subset \bar{\mathcal X}$, of non-zero volume. We can define its barycenter
$$\mu_{\mathcal X} = \mathbb E_{X \sim \mathcal U_{\mathcal X}} X,$$
and its variance-covariance matrix
$$\Sigma_{ \mathcal X} = \mathbb V_{X \sim \mathcal U_{ \mathcal X}} X.$$
Since $\mathcal X$ has non-zero volume note that $\Sigma_{ \mathcal X}$ is invertible.

Write $F_{\mathcal X}$ for the linear transformation
$$F_{\mathcal X}: x \rightarrow \frac{1}{\sqrt{d}}\Sigma_{ \mathcal X}^{-1/2}(x - \mu_{ \mathcal X}).$$
Note that the convex set $\mathcal Z^{\mathcal X} = F_{\mathcal X} (\mathcal X)$ is in isotropic position renormalised by $d^{-1/2}$. Write also $ \bar{\mathcal Z}^{\mathcal X} = F_{\mathcal X}(\bar{\mathcal X})$, and 
$z^* = F_{\mathcal X} (x^*)$.

Define for any $z\in \mathbb R^d$
$$g^{\mathcal X} (z) = f(F^{-1}_{\mathcal X}(z)) =f(\sqrt{d}\Sigma_{ \mathcal X}^{1/2}(z+\mu_{ \mathcal X})).$$
Note that $g^{\mathcal X}$ is convex on $\mathbb R^d$ and that also in particular the function $f$  is the same up to a linear transformation than the function $g^{\mathcal X}$ - and this linear transformation transforms $\bar{\mathcal X}$ in $\bar{\mathcal Z}^{\mathcal X}$ and $x^*$ into $(z^*)^{\mathcal X}$. When no ambiguity arises, we write $g$ for $g^{\mathcal X}$, $z^*$ for $(z^*)^{\mathcal X}$, $\mathcal Z$ for $\mathcal Z^{\mathcal X}$ and $\bar{\mathcal Z}$ for $\bar{\mathcal Z}^{\mathcal X}$  - and note that $g(y^*) = f^*$.


Define for $c> 0, z\in \mathbb R^d$
$$g_c^{\mathcal X} (z) = \mathbb E_{Z\sim \mathcal U_{\mathbb B_2(c)}}g(z+Z),$$
with the convention $g_0^{\mathcal X} (.) = g^{\mathcal X}$. Again when no ambiguity arises, we write $g_c$ for $g_c^{\mathcal X}$. Note that $g_c$ is convex on $\mathbb R^d$, and that $g_c \geq g_{c'}$ for any $0 \leq c' \leq c$. Note also that for $c>0$, $g_c$ is differentiable on $\mathbb R^d$, and that by Stoke's theorem, for any $z\in \mathbb R^d$:
\begin{equation}\label{eq:nableq}
    \nabla g_c(y) = \frac{d}{c^2}  \mathbb E_{Z\sim \mathcal U_{\mathbb S_2(c)}} \Big[Zg(y+Z)\Big],
\end{equation}
see~\cite[Theorem 5]{akhavan2022gradient} for a precise reference.

A fundamental property of convex functions is that, for any $z\in \mathbb R^d$ and any sub-gradient $\nabla g(z)$ at this point, if $g(z)) - g(\tilde z)$ is large, the sub-gradient correlates significantly with $z-\tilde z$. Namely
$$\langle \nabla g(z), z - \tilde z \rangle \geq g(z)) - g(\tilde z).$$
The following lemma is a simple, yet key result for this paper, and extends this property to the smoothed function $g_c$ - namely, that if $g(z)) - g(\tilde z)$ is large, the sub-gradient $\nabla g_c(z)$ correlates significantly with $z-\tilde z$ - in fact it holds under the relaxed condition that $g_c(z)) - g(\tilde z)$ is large.
\begin{lemma}\label{lem:geom}
    Let $c>0$ and $z,\tilde z\in \mathbb R^d$. 
    If $g_{2c} (z) - g_c(z) \leq 2^{-2}[g_c(z)) - g(\tilde z)]$
    $$\langle \nabla g_c(z), z - \tilde z \rangle \geq \frac{3}{4}[g_c(z)) - g(\tilde z)].$$
\end{lemma}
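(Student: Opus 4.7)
The plan is to reduce the claim to an upper bound on a single ``slack'' term and then use the hypothesis on $g_{2c}(z)-g_c(z)$ to dominate that slack.

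First I would use the averaged-gradient representation
\begin{equation*}
\nabla g_c(z)=\mathbb{E}_{U\sim\mathcal{U}_{\mathbb{B}_2(c)}}[\nabla g(z+U)],
\end{equation*}
obtained by differentiating $g_c(z)=\mathbb{E}_{V\sim\mathcal{U}_{\mathbb{B}_2(1)}} g(z+cV)$ under the integral (equivalently, by applying Stokes to (\ref{eq:nableq})). Since $g$ is convex it is differentiable almost everywhere, and this formula is interpreted through a measurable selection of subgradients on the null set where $g$ is not smooth. By the subgradient inequality applied at $z+u$,
\begin{equation*}
\langle \nabla g(z+u),(z+u)-\tilde z\rangle \;\geq\; g(z+u)-g(\tilde z).
\end{equation*}
Splitting the left-hand side as $\langle \nabla g(z+u),z-\tilde z\rangle+\langle \nabla g(z+u),u\rangle$ and averaging over $U\sim\mathcal{U}_{\mathbb{B}_2(c)}$ gives
\begin{equation*}
\langle \nabla g_c(z),z-\tilde z\rangle \;\geq\; g_c(z)-g(\tilde z)-\mathbb{E}_U\langle \nabla g(z+U),U\rangle.
\end{equation*}

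The next step is to identify the slack $\mathbb{E}_U\langle \nabla g(z+U),U\rangle$ with a radial derivative of the smoothing and to bound it by $g_{2c}(z)-g_c(z)$. Writing $U=cV$ with $V\sim\mathcal{U}_{\mathbb{B}_2(1)}$ and differentiating $g_c(z)=\mathbb{E}_V g(z+cV)$ in $c$ yields
\begin{equation*}
\frac{d}{dc}g_c(z)=\mathbb{E}_V\langle \nabla g(z+cV),V\rangle=\frac{1}{c}\,\mathbb{E}_U\langle \nabla g(z+U),U\rangle.
\end{equation*}
For every fixed $V$ the map $c\mapsto g(z+cV)$ is convex, so $c\mapsto g_c(z)$ is convex as well. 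The tangent-line inequality at $c$ evaluated at $2c$ then gives $g_{2c}(z)\geq g_c(z)+c\cdot\tfrac{d}{dc}g_c(z)$, i.e.
\begin{equation*}
\mathbb{E}_U\langle \nabla g(z+U),U\rangle \;=\; c\,\tfrac{d}{dc}g_c(z) \;\leq\; g_{2c}(z)-g_c(z).
\end{equation*}

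Combining the two displays with the assumption $g_{2c}(z)-g_c(z)\leq \tfrac14[g_c(z)-g(\tilde z)]$ gives
\begin{equation*}
\langle \nabla g_c(z),z-\tilde z\rangle\;\geq\; g_c(z)-g(\tilde z)-\tfrac14[g_c(z)-g(\tilde z)]\;=\;\tfrac34[g_c(z)-g(\tilde z)],
\end{equation*}
which is the claim. The main technical obstacle I expect is a careful treatment of the non-differentiability of $g$: one must pick measurable subgradient selections and justify the exchange of derivative and expectation (routine, since $U$ admits a density and $g$ is locally Lipschitz on its effective domain). Apart from this, the argument is pure convexity arithmetic, with the one-parameter convex family $c\mapsto g_c(z)$ doing the work of absorbing the slack.
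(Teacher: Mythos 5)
Your proof is correct, but it takes a genuinely different route from the paper. The paper's argument is a pure scaling trick: after normalising $\tilde z=0$, $g(\tilde z)=0$, convexity gives $g(z'/2)\leq g(z')/2$, hence $g_c(z/2)\leq g_{2c}(z)/2\leq \tfrac58 g_c(z)$ under the hypothesis, and then the gradient inequality for the differentiable convex function $g_c$ between $z$ and $z/2$ yields $\langle \nabla g_c(z),z\rangle\geq \tfrac34 g_c(z)$; no representation of $\nabla g_c$ as an average of subgradients is needed, so all measurable-selection and differentiation-under-the-integral issues are sidestepped (only the differentiability of $g_c$, already recorded in Section~\ref{sec:prel}, is used). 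Your argument instead decomposes $\langle \nabla g_c(z),z-\tilde z\rangle$ via the averaged-subgradient identity $\nabla g_c(z)=\mathbb E_U[\nabla g(z+U)]$ and isolates the slack $\mathbb E_U\langle \nabla g(z+U),U\rangle$, which you dominate by $g_{2c}(z)-g_c(z)$ through convexity of $c\mapsto g_c(z)$; this makes very transparent exactly where the hypothesis enters, and quantitatively both routes give the same constant. Two remarks: (i) the interchanges you defer (gradient--expectation, and $d/dc$ under the expectation) do need the mild caveat that they are valid only when $\mathbb B_2(z,2c)$ stays inside the region where $g$ is finite, since the paper extends $f$ by $+\infty$ outside $\bar{\mathcal X}$ -- this is harmless in the regime where the lemma is invoked, and the paper's statement is equally loose on this point; (ii) your slack bound can be obtained without any derivative in $c$: the subgradient inequality at $z+u$ in the direction of $z+2u$ gives $\langle \nabla g(z+u),u\rangle\leq g(z+2u)-g(z+u)$, and averaging over $U\sim\mathcal U_{\mathbb B_2(c)}$ (using $2U\sim\mathcal U_{\mathbb B_2(2c)}$) directly yields $\mathbb E_U\langle \nabla g(z+U),U\rangle\leq g_{2c}(z)-g_c(z)$, which removes one limit-interchange from your argument.
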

The proof of this lemma is in Appendix~\ref{lem:tech}. It implies in particular that if $g_{2c} (z) - g(z) \leq 2^{-2}[g(z)) - f^*]$ - i.e.~if the distance between the proxy $g_{2c}(z)$ and the function $g(z)$ is of smaller order than the optimality gap of $g(z)$ (compared to the minimum $f^*$ of $g$), then the gradient of the proxy is interesting, namely $\nabla g_c(z)$ is correlated to $z - z^*$, with correlation larger than said optimality gap. In other words, the properties of $\nabla g_c(z)$ are similar to those of a sub-gradient $\nabla g(z)$, when it comes to the minimal correlation to the direction of the minimum.

\subsection{Estimators of the function and of the gradient of smoothed convex functions}

Consider now $z \in \bar{\mathcal X}$ and resp.~$Z_1^{(b)},\ldots, Z_N^{(b)} \sim_{i.i.d.} \mathcal U_{\mathbb B_2(c)}$ and $Z_1^{(s)},\ldots, Z_N^{(s)} \sim_{i.i.d.} \mathcal U_{\mathbb S_2(c)}$ for points sampled respectively uniformly in the ball of center $0$ and radius $c$, and in the sphere of center $0$ and radius $c$. Assume that we observe independent noisy observations of the function $f$ at the points $F^{-1}_{\mathcal X}(z+Z_1^{(k)}),\ldots, F^{-1}_{\mathcal X}(z+Z_N^{(k)})$ where $k \in \{b,s\}$ - i.e.~equivalently we observe independent noisy observations of the function $g$ at the points $z+Z_1^{(k)},\ldots, z+Z_N^{(k)}$ - that we write
$$(\tilde y_t^{(k)})_{t\leq N},$$
where the $\tilde y_t^{(k)} \in [0,1]$ are such that $\mathbb E[\tilde y_t^{(k)}|F^{-1}_{\mathcal X}(z+Z_t^{(k)}) = x] = f(x)$ and such that $\tilde y_t^{(k)}$ knowing $F^{-1}_{\mathcal X}(z+Z_t^{(k)})$ is independent of the past observations.

Define:
\begin{equation}\label{eq:estimg}
    \hat g_c(z) = \frac{1}{N} \sum_{i=1}^N\tilde y_i^{(b)},
\end{equation}
and
\begin{equation}\label{eq:nablag}
\widehat{ \nabla g_c}(z) = \frac{d}{c^2N} \sum_{i=1}^N Z_i^{(s)}\tilde y_i^{(s)}.
\end{equation}


The following lemma provides a concentration result for both the estimator of the function, and of the estimator of the gradient.
\begin{lemma}\label{lem:conc}
    Let $c\geq 0$, $z\in \bar{\mathcal Z}$ such that $\mathbb B_2(z, c) \subset \bar{\mathcal Z}$ and $u \in \mathbb R^d$. With probability larger than $1-\delta$
    $$|\hat g_c(z) - g_c(z)|  \leq \frac{\et(1/\delta)}{\sqrt{N}},$$
    and if $N \geq d\log(2/\delta)$
$$|\langle \widehat{ \nabla g_c}(z) - \nabla g_c(z), u\rangle|  \leq \et(1/\delta) \|u\|_2  \frac{\sqrt{d}}{c\sqrt{N}},$$
where $\et(1/\delta) = 4 \sqrt{ \log(2/\delta)}$.
\end{lemma}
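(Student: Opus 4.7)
I would prove the two bounds separately, using Hoeffding for the function estimator and Bernstein for the gradient estimator.

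\textbf{Step 1: Function estimator.} By definition, $\tilde y_i^{(b)} \in [0,1]$, and the tower property together with the noise model gives
$\mathbb E[\tilde y_i^{(b)}] = \mathbb E_{Z \sim \mathcal U_{\mathbb B_2(c)}}[g(z + Z)] = g_c(z).$
The condition $\mathbb B_2(z,c) \subset \bar{\mathcal Z}$ ensures that $z + Z_i^{(b)} \in \bar{\mathcal Z}$, so the samples are indeed in $[0,1]$ (not $+\infty$). Hoeffding's inequality applied to $\hat g_c(z)$, an i.i.d.\ average of $[0,1]$-valued variables, gives $|\hat g_c(z) - g_c(z)| \leq \sqrt{\log(2/\delta)/(2N)}$ with probability at least $1-\delta$, which is stronger than the stated bound.

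\textbf{Step 2: Gradient estimator -- unbiasedness and moment bounds.} Unbiasedness follows from~(\ref{eq:nableq}): conditioning on $Z_i^{(s)}$, the noise model yields $\mathbb E[Z_i^{(s)} \tilde y_i^{(s)}] = \mathbb E[Z_i^{(s)} g(z + Z_i^{(s)})] = (c^2/d)\nabla g_c(z)$. Write
\[
\langle \widehat{\nabla g_c}(z) - \nabla g_c(z), u\rangle = \frac{1}{N}\sum_{i=1}^N X_i,\qquad X_i = \tfrac{d}{c^2}\langle Z_i^{(s)}, u\rangle \tilde y_i^{(s)} - \langle \nabla g_c(z), u\rangle.
\]
The per-sample magnitude is at most $M := 2d\|u\|_2/c$, since $\|Z_i^{(s)}\|_2 = c$, $\tilde y_i^{(s)} \in [0,1]$, and $|\langle \nabla g_c(z), u\rangle| \leq \|\nabla g_c(z)\|_2 \|u\|_2$ can be crudely bounded by $d\|u\|_2/c$ via (\ref{eq:nableq}). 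The key step is the variance calculation: using $(\tilde y_i^{(s)})^2 \leq 1$ and the isotropy identity $\mathbb E[Z^{(s)}(Z^{(s)})^\top] = (c^2/d)I$ for $Z^{(s)} \sim \mathcal U_{\mathbb S_2(c)}$, we obtain
\[
\mathrm{Var}(X_i) \leq \tfrac{d^2}{c^4}\mathbb E[\langle Z_i^{(s)}, u\rangle^2] = \tfrac{d^2}{c^4}\cdot \tfrac{c^2}{d}\|u\|_2^2 = \tfrac{d\|u\|_2^2}{c^2} =: \sigma^2.
\]
It is this saving of a factor $d$ in the variance (over the naive $M^2$ bound) that will save the factor $\sqrt{d}$ in the final rate, and is the main obstacle: a plain Hoeffding bound would give only a $d\|u\|_2/(c\sqrt{N})$ rate.

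\textbf{Step 3: Bernstein and regime check.} Bernstein's inequality applied to $\frac{1}{N}\sum X_i$ yields, with probability at least $1-\delta$,
\[
\left|\frac{1}{N}\sum_{i=1}^N X_i\right| \leq \sqrt{\frac{2\sigma^2 \log(2/\delta)}{N}} + \frac{2 M \log(2/\delta)}{3N} \leq \|u\|_2\sqrt{\frac{2 d\log(2/\delta)}{Nc^2}} + \frac{4 d \|u\|_2 \log(2/\delta)}{3cN}.
\]
The condition $N \geq d\log(2/\delta)$ implies $d\log(2/\delta)/N \leq \sqrt{d\log(2/\delta)/N}$, so the bounded-range term is dominated by the variance term, and both together are bounded by $4\sqrt{\log(2/\delta)}\,\|u\|_2\sqrt{d}/(c\sqrt{N})$ after absorbing the constants. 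This is exactly the claimed bound with $\et(1/\delta) = 4\sqrt{\log(2/\delta)}$. Combining the two events via a union bound (or treating them as independent, since one uses ball samples and the other sphere samples) completes the proof.
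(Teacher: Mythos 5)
Your proof is correct and follows essentially the same route as the paper's: Hoeffding for $\hat g_c(z)$, and Bernstein for the gradient estimator with the crucial variance bound $\mathbb E[\langle Z^{(s)},u\rangle^2]=(c^2/d)\|u\|_2^2$ that gains the factor $\sqrt d$, followed by using $N\geq d\log(2/\delta)$ to absorb the range term. The only cosmetic difference is that you center the summands before bounding their magnitude (hence your factor $2d\|u\|_2/c$ instead of $d\|u\|_2/c$), which changes nothing in the conclusion.
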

The proof of this lemma is in Appendix~\ref{lem:tech}, and is based on very standard concentration arguments. The study of related estimators was first formulated to the best of our knowledge in~\cite{nemirovskij1983problem}, and then refined in~\cite{bach2016highly, akhavan2022gradient} (among others). Note however that in these works, the proximity of these estimators to $g$ or its gradient is controlled, under smoothness assumptions. This is not the approach that we take here, as we do not work under additional smoothness assumptions - so that the proxies $g_c$ can be arbitrarily far from $g$ and its gradient in many points.


\section{Algorithm}\label{sec:alg}

Our algorithm is an adaptation of the center of gravity method to the noisy, gradientless case. In the classical  center of gravity method, we iteratively refine the convex set $\bar{\mathcal{X}}$ at each step. More precisely, assume that we are given a convex set $\mathcal X \subset \bar{\mathcal{X}}$ at a given iteration. We refine it by computing the gradient $\nabla(f)(x)$ of $f$ at the center of gravity $x$ of $\mathcal X$, and updating $\mathcal X$ to $\mathcal X \cap \{u: \langle \nabla f(x), u-x\rangle \leq 0\}$. This method is efficient as
\begin{enumerate}
    \item by convexity of $f$, $x^*$ remains in $\mathcal X$ for any iteration, and
    \item a fundamental property of convex sets is that if we separate them in two parts by any hyperplane going through their center of gravity, both part of the convex set have approximately the same volume.
\end{enumerate}

In our case, we do not have access to $\nabla(f)$, but only to noisy evaluations of $f$. The idea behind our method is to estimate instead the gradient of another function - namely, of $g_c^{\mathcal X}$ for a well chosen $c$, i.e.~a linear transformation of $f$ that is also smoothed. We have seen in Lemma~\ref{lem:conc} that this task can be performed efficiently. However, this gradient might be quite different from any sub-gradient of $f$. We have however seen in Lemma~\ref{lem:geom} that under the condition that $g_{2c}(0)-g_{c}(0)$ is small enough, the gradient of $g_c$ has the nice property that it correlates positively to $F_{\mathcal X}(x)-F_{\mathcal X}(\tilde x)$, for any $\tilde x$ such that $f(\tilde x)$ is small enough. So that $F_{\mathcal X}^{-1}(\nabla g_c(x))$ could be used instead of the gradient of $f$ in the center of gravity method. 

The only problem remaining that the center of gravity is not necessarily such that $g_{2c}(0)-g_{c}(0)$ is small. In order to circumvent this, we find another point $z$ that is such that it has this property, and is also such that $\|z\|_2$ is small enough so that cutting $\mathcal X$ in $F_{\mathcal X}^{-1}(z)$ enjoys provides similar volume guarantees than cutting it in $x$.

The main algorithm $\algo$ described below in Figure~\ref{Algo:algo} is therefore using two recursive sub-routines:
\begin{itemize}
    \item it first calls an iterative sub-routine $\Cut$ described in Figure~\ref{Algo:Cut} that \textit{cuts the current set $\mathcal X$ in two}, until the budget is elapsed,
    \item this routine calls another sub-routine $\FCP$ described in Figure~\ref{Algo:FCP}, which finds a \textit{good cutting point}, as explained above.
\end{itemize}

\subsection{Part 1: finding a cutting point}

We first describe the sub-routine that identifies a good candidate for a cutting point. This subroutine acts in the linear transformation $\mathcal Z^{\mathcal X}$ of $\mathcal X$ through $F_{\mathcal X}$.  Starting from $z_0$, we want to find using a budget of order $N$ - up to multiplicative polylog terms - a point $z$ such that:
\begin{itemize}
    \item either $g_{2c}(z) - g_{z}( x) \leq 2^{-3} (g_{c}(z) - f^*)$, or $g(z)$ is small (say, smaller than $1/\sqrt{N}$ up to multiplicative polylog terms)
    \item $\| z -  z_0\|_2$ is of smaller order than $c$ up to multiplicative polylog terms,
\end{itemize}
provided that such a point exists. In this way, we ensure that this point would satisfy the condition of Lemma~\ref{lem:geom}, or be such that $g(z)$ is small enough, and also that it is not too far from $z_0$.


Assume that we are given a set $\mathcal X$ and $c >0$. For $N\geq 1$, let $I_N = \log_2(N)+1$ and $\Mc(N) = \log(2N)/\log(17/16) +1 $. 
The recursive algorithm $\FCP$ that performs this takes as parameters a candidate for a cutting point $z \in  \mathbb R^d$, the current set to be cut $\mathcal X \subset \mathbb R^d$, a smoothness parameter $c>0$, a basis number of samples that will be our approximate final budget up to polylog terms $N \in \mathbb N$, a counting of the number of recursive rounds performed $s \geq 0$, and a confidence parameter $\delta>0$. During each run, the algorithm either returns the final cutting point $z\in \mathbb R^d$, as well as an estimator of $g(z)$ by $\hat g_z)$, or calls itself recursively. Note that this subroutine will require to sample the function $f$ and as it is typically called by another algorithm which operates based on a total budget $n$, as soon as this budget is elapsed, the algorithm $\FCP$ terminates returning the current $(z,\hat g_z)$. It proceeds in the following steps.
\begin{enumerate}
    \item It first sample the function $f$ in $F_{\mathcal X}^{-1}(z)$ for $N$ times and estimate in this way $g^{\mathcal X}(z)$ by $\hat g_z$ as described in Equation~\eqref{eq:estimg}.
    \item For all integer $i \leq I_N$, sample $2^i$ points distributed as $z+\mathcal U_{\mathbb B_2(2c)}$, and write $(z_j^{(i)})_{i \leq I_N, j \leq 2^i}$ for these points. Sample $2^{-i}N/i^2$ times the function $f$ at $F_{\mathcal X}^{-1}(z_j^{(i)})$ and estimate in this way $g^{\mathcal X}(z_j^{(i)})$ by $\hat g_{z_j^{(i)}}$ as described in Equation~\eqref{eq:estimg}.
    \item If there exists $z_j^i$ such that $\hat g_{z_j^{(i)}}  - \hat g_z \geq  \frac{(17/16)^s}{16N}+4\et(2^i i^2 \Mc(N)/\delta) \sqrt{\frac{i^2 2^i}{N}}$ then call $\FCP(z_j^i, \mathcal X, c,N, s+1, \delta)$. Otherwise return $(z,\hat g_z)$.
\end{enumerate}
In this way, we evaluate whether, in a radius of $2c$ around $z$ there is a significantly large set of points such that $g$ evaluated in these points is large - i.e.~exponentially growing with the number of iterations $s$. If this is the case, we identify one of these points, and propose it as next barycentric candidate. Otherwise, we identify $z$ as a good candidate and return it. The full algorithm is summarized in Figure~\ref{Algo:FCP}

\begin{algorithm}[ht]
	\caption{$\FCP$}\label{Algo:FCP}
	\begin{algorithmic}[1]
		\Require $(z, \mathcal X, c, N, s,\delta)$
		\Ensure $(z,\hat g_z)$ - except if the budget elapses in which case it stops
		\State Sample the function $f$ in $F_{\mathcal X}^{-1}(z)$ for $N$ times and estimate in this way $g^{\mathcal X}(z)$ by $\hat g_z$ as in Equation~\eqref{eq:estimg}
  \For{all integer $i \leq I_N$} 
  \State Sample $2^i$ points as $z+\mathcal U_{\mathbb B_2(2c)}$, and write $(z_j^{(i)})_{i \leq I, j \leq 2^i}$ for these points
  \State Sample $2^{-i}N/i^2$ times the function $f$ at $F_{\mathcal X}^{-1}(z_j^{(i)})$ and estimate in this way $g^{\mathcal X}(z_j^{(i)})$ by $\hat g_{z_j^{(i)}}$ as in Equation~\eqref{eq:estimg}
  \EndFor
     \If{there exists $z_j^i$ such that $\hat g_{z_j^{(i)}}  - \hat g_z \geq  \frac{(17/16)^s}{16N}+4\et(2^i i^2 \Mc(N)/\delta) \sqrt{\frac{i^2 2^i}{N}}$} 
    \State  call $\FCP(z_j^i, \mathcal X, c,N, s+1, \delta)$ for such a $(i,j)$
    \Else 
    \State {\bf return} $(z,\hat g_z)$
    \EndIf
	\end{algorithmic}
\end{algorithm}

\subsection{Part 2: routine for effectively cutting the space}

We now describe the subroutine that iteratively cuts the space, taking as parameter a convex set $\mathcal X \subset \bar{\mathcal X}$. It also maintains a current estimation $\hat x$ of the minimum. It updates these to $\mathcal X', \hat x'$. We would like it to satisfy that with high probability:
\begin{itemize}
    \item the volume of $\mathcal X'$ is a fraction of the volume of $\mathcal X$
    \item {\bf either} a small ball around the true minimum is in $\mathcal X'$, {\bf or} the current estimator of the minimum $\hat x'$ is already very good.
\end{itemize}

Set $\NF = \frac{n\log(10/9) }{4d \log\Big(nd\mathrm{diam}(\bar{\mathcal X})/r\Big)}$, $\MF = 5n/\NF$, $\Nc =  \NF/(4\Mc)$ where\footnote{Looking at the definition of $\Mc(\Nc)$, it is clear that such $\Nc,\Mc$ exist and $\Nc$ is of order $\NF$ and $\Nc$ is of order $\log(\NF)$).} $\Mc = \Mc(\Nc)$, and $c =  1/(8e\Mc\sqrt{d})$.
We define the recursive algorithm $\Cut$ taking as parameters a candidate set $\mathcal X \subset \mathbb R^d$, a candidate estimator of the minimum of $f$ by $\hat x \in \mathbb R^d$, an estimate of the value of $f$ at this point $\hat f \in \mathbb R$,  and a probability $\delta>0$. During each run, the algorithm calls itself recursively. Note that this subroutine will require to sample the function $f$ and as it is typically called by another algorithm which operates based on a total budget $n$, as soon as this budget is elapsed, the algorithm $\Cut$ terminates returning the current $\hat x$. It proceeds in the following steps.
\begin{enumerate}
    \item Run $\FCP(0,\mathcal X, c,\Nc,0,\delta)$ and collect $(z,\hat g_z)$
    \item If $\hat g_z \leq \hat f$ set $\hat x' = F^{-1}_{\mathcal X}(z)$ and $\hat f' = \hat g_z$, otherwise set $\hat x' = \hat x$ and $\hat f' = \hat f$
    \item Compute an estimator $\widehat{ \nabla g_{c}}$ of $\widehat{ \nabla g^{\mathcal X}_{c}}(x)$ using $\NF$ samples, as described in Equation~\eqref{eq:nablag}.
    \item Set $\mathcal X' = \mathcal X \cap F_{\mathcal X}^{-1}\Big(\{u: \langle u-z, \widehat{ \nabla g_{c}} \rangle \leq 0 \}\Big)$
    \item Run $\Cut(\mathcal X',\hat f', \hat x',\delta)$
\end{enumerate}
This follows the idea of the center of gravity method, using a well-chosen cutting point returned by $\FCP$  and cutting then according to the gradient of a smoothed version of $f$, and continuing recursively. The full algorithm is summarised in Figure~\ref{Algo:Cut}.
 

\begin{algorithm}[ht]
	\caption{$\Cut$}\label{Algo:Cut}
	\begin{algorithmic}[1]
		\Require $(\mathcal X, \hat f, \hat x, \delta)$
		\Ensure $\hat x$ as the budget elapses - after all $n$ samples have been used, it returns the current $\hat x$
		\State Run $\FCP(0,\mathcal X, c,\Nc,0,\delta)$ and collect $(z,\hat g_z)$
        \If{$\hat g_z \leq \hat f$} 
         \State Set $\hat x' = F^{-1}_{\mathcal X}(z)$ and $\hat f' = \hat g_z$
         \Else 
         \State Set $\hat x' = \hat x$ and $\hat f' = \hat f$
         \EndIf
         \State Compute an estimator $\widehat{ \nabla g_{c}}$ of $\widehat{ \nabla g^{\mathcal X}_{c}}(x)$ using $\NF$ samples, as in Equation~\eqref{eq:nablag}
    \State Set $\mathcal X' = \mathcal X \cap F_{\mathcal X}^{-1}\Big(\{u: \langle u-z, \widehat{ \nabla g_{c}} \rangle \leq 0 \}\Big)$
    \State Run $\Cut(\mathcal X',\hat f', \hat x',\delta)$
	\end{algorithmic}
\end{algorithm}

\subsection{Part 3: final algorithm}

The main algorithm $\algo$ is finally launched with a total budget $n$ and a confidence parameter $\delta>0$, and returns an estimator $\hat x$ of the minimum. It is basically an application of $\Cut$ on a reasonable initialisation, and proceeds in the following steps.
\begin{enumerate}
    \item we first sample $\Nc$ times the function $f$ at $\mu_{\bar{\mathcal X}})$ and compute an estimator $\hat f$ of $f(\mu_{\bar{\mathcal X}}))$ as in Equation~\eqref{eq:estimg} - recalling that $f(\mu_{\bar{\mathcal X}})) = g(0)$.
    \item we apply $\Cut(\bar{\mathcal X}, \hat f, \mu_{\bar{\mathcal X}}, \delta)$) and retrieve $\hat x$ when the budget is elapsed.
    \item we return $\hat x$
\end{enumerate}
This algorithm is summarised in Figure~\ref{Algo:algo}.

\begin{algorithm}[ht]
	\caption{$\algo$}\label{Algo:algo}
	\begin{algorithmic}[1]
		\Require $(n,\delta)$
		\Ensure $\hat x$ as the budget elapses
		\item Sample $\Nc$ times the function $f$ at $\mu_{\bar{\mathcal X}}$ and compute in this way an estimator $\hat f$ of $f(\mu_{\bar{\mathcal X}}))$ as in Equation~\eqref{eq:estimg}
    \State Run $\Cut(\bar{\mathcal X}, \hat f, \mu_{\bar{\mathcal X}}, \delta)$ until the budget is elapsed and retrieve $\hat x$
    \State {\bf return} $\hat x$ 
	\end{algorithmic}
\end{algorithm}


The following theorem holds for the output of $\algo$.
\begin{theorem}\label{thm:cut}
    Assume that Assumptions~\ref{ass:bound} holds. The algorithm $\algo$ launched with a total budget $n$ and a confidence parameter $\delta$ returns $\hat x$ that is such that with probability larger than $1-\delta$:
    \begin{align*}
    f(\hat x) - f^* &\leq \Big[2^{16} \et(\Mc/\delta)\log(2\Mc/\delta) \frac{1}{\sqrt{\Nc}}\Big]\\ &\lor \Big[32\et(10d\MF/\delta)  \frac{d}{c\sqrt{\NF}}\Big] \lor (8/n) \lor [d \log(2/\delta)/\NF]\\
    &\leq  c\mathrm{poly}\log\big(nd\mathrm{diam}(\bar{\mathcal X})/r\big)^\alpha \times \frac{d^2}{\sqrt{n}} \log(1/\delta)^{3/2},
    \end{align*}
    where $c,\alpha>0$ are two absolute constants (independent on $f,\bar{\mathcal X}, n,d,\delta$).
\end{theorem}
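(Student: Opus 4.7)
The plan is to show the theorem by conditioning on a high-probability ``good'' event, analyzing $\FCP$ and $\Cut$ inductively, and closing the loop with a volume argument.

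First I would define the good event $\mathcal E$ by union-bounding Lemma~\ref{lem:conc} across all (at most $\MF$) iterations of $\Cut$ and all internal samplings of $\FCP$ (at each of its at most $\Mc$ recursion depths, over its $I_{\Nc}$ scales, with sample counts $\Nc$, $\Nc/i^2$, etc.), using the confidence parameters that appear explicitly in the algorithm; the confidence budget is chosen so that $\mathbb P(\mathcal E) \geq 1-\delta$. All subsequent analysis is conditional on $\mathcal E$. Under $\mathcal E$ I would analyze $\FCP$ by induction on the recursion depth $s$: every recursive call certifies, via the concentration width $4\et(\cdot)\sqrt{i^2 2^i/N}$ subtracted from the empirical gap, a true gap $g(z_j^{(i)}) - g(z) \geq (17/16)^s/(16N_\FCP)$. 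Since $g$ is bounded by $1$ (on $\bar{\mathcal X}$), this forces termination at depth $s^\ast \leq \Mc$. At termination, averaging the ``no-trigger'' inequality over $j$ at scale $i = I_{\Nc}$ yields
\begin{equation*}
    g_{2c}(z) - g(z) \leq \tfrac{(17/16)^{s^\ast}}{16\Nc} + O\!\big(\et(\Mc/\delta)\,\tfrac{\log \Nc}{\sqrt{\Nc}}\big),
\end{equation*}
and by construction $\|z - z_0\|_2 \leq 2c\,s^\ast \leq 2c\Mc$.

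Next I would analyze one iteration of $\Cut$. Writing $\epsilon$ for the maximum of the four terms on the right-hand side of the theorem, I split into two cases. If $g(z_k) - f^\ast \leq \epsilon$, then the bookkeeping update of $(\hat f', \hat x')$ together with $|g(z_k) - \hat g_{z_k}|\leq \et/\sqrt{\Nc}$ certifies that the running minimum already achieves $f(\hat x_{k+1}) - f^\ast \leq \epsilon$, and the analysis can stop. Otherwise $g_c(z_k) - f^\ast \geq g(z_k) - f^\ast \geq \epsilon$; the calibration of $\epsilon$ makes the $\FCP$ remainder no larger than $\tfrac14(g_c(z_k) - f^\ast)$, so Lemma~\ref{lem:geom} with $\tilde z = z^\ast$ yields $\langle \nabla g_c(z_k), z_k - z^\ast\rangle \geq \tfrac34(g_c(z_k) - f^\ast)$. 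Lemma~\ref{lem:conc} applied with the fixed (under $\mathcal E$) direction $u = z_k - z^\ast$, whose norm is at most the $O(d)$ diameter of $\bar{\mathcal Z}^{\mathcal X}$ in the renormalized isotropic coordinates, provides an additive error of $\et(\cdot)\,d/(c\sqrt{\NF})$; as long as $\epsilon$ dominates this (which is exactly the second clause of the $\lor$), the empirical inner product is still positive and hence $z^\ast\in \mathcal X'$. Since $\|z_k\|_2 \leq 2c\Mc = 1/(4e\sqrt d)$ in the renormalized isotropic frame (centroid $0$, covariance $I_d/d$), a Gr\"unbaum-type bound for cuts through near-centroid points then gives $\mathrm{vol}(\mathcal X') \leq \tfrac{9}{10}\mathrm{vol}(\mathcal X)$.

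To close the argument I would run the volume-persistence loop. Maintain alongside $\mathcal X_k$ a convex ``witness'' $K_k \subseteq \mathcal X_k$ obtained by iteratively intersecting $\mathbb B_2(x^\ast,r)$ (which lies in $\bar{\mathcal X}$ by Assumption~\ref{ass:bound}) with the surviving half-spaces; each cut halves $K_k$'s volume at worst, so $\mathrm{vol}(K_k) \geq 2^{-k}\mathrm{vol}(\mathbb B_2(x^\ast,r))$, while simultaneously $\mathrm{vol}(\mathcal X_k) \leq (9/10)^k \mathrm{vol}(\bar{\mathcal X})$. These two inequalities become inconsistent once $k \gtrsim d\log(\mathrm{diam}(\bar{\mathcal X})/r)$, which is precisely the definition of $\MF$. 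Consequently, within the $\MF$ iterations that the budget $n$ affords, the first case of the dichotomy must occur at some iteration and the running $\hat x$ achieves regret $\leq \epsilon$. Plugging in $\Nc = \Theta(n/(d\,\mathrm{polylog}))$, $\Mc = \Theta(\log n)$, and $c = 1/(8e\Mc\sqrt d)$ turns the dominant term $\et(\cdot)\,d/(c\sqrt{\NF})$ into $d^2\cdot\mathrm{polylog}(nd\,\mathrm{diam}(\bar{\mathcal X})/r)\cdot\log(1/\delta)^{3/2}/\sqrt n$, matching the stated bound.

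I expect the main obstacle to be the volume-persistence step: keeping track, across iterations of $\Cut$, of a lower bound on the volume of a set containing (or close to) $x^\ast$ inside the current $\mathcal X_k$, despite the fact that the isotropic transformation $F_{\mathcal X}$ and the centroid relative to which ``$\|z_k\|_2$ is small'' are being recomputed at every iteration. Carefully quantifying how much of the initial ball $\mathbb B_2(x^\ast,r)$ survives under a sequence of half-space cuts whose apex lies within $O(1/\sqrt d)$ of the shifting centroid—in order to obtain the lower bound that drives the $d\log$-type iteration count $\MF$—is where the geometric care is required.
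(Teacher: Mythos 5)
Your outline of the $\FCP$ analysis, the per-iteration analysis of $\Cut$ (Lemma~\ref{lem:geom} plus Lemma~\ref{lem:conc}, near-centroid Gr\"unbaum-type cut giving the $9/10$ volume drop), and the calibration of $\Nc,\Mc,c$ all match the paper. The genuine gap is exactly at the step you flag as the main obstacle: the volume-persistence argument as you propose it does not work. First, your per-iteration guarantee only shows that the single point $z^\ast$ survives the cut; nothing prevents the cutting hyperplane from passing through $x^\ast$ itself, so there is no justification for the claim that each half-space cut removes at most half of the witness set $K_k=\mathbb B_2(x^\ast,r)\cap(\text{half-spaces})$ --- Gr\"unbaum-type bounds apply to cuts near the centroid of the body being cut, and the cut point is near the centroid of $\mathcal X_k$, not of $K_k$. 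Second, even granting the halving, the two bounds $\mathrm{vol}(K_k)\geq 2^{-k}\mathrm{vol}(\mathbb B_2(x^\ast,r))$ and $\mathrm{vol}(\mathcal X_k)\leq (9/10)^k\mathrm{vol}(\bar{\mathcal X})$ are never inconsistent, since $2^{-k}\leq (9/10)^k$ for all $k$; so no contradiction, and hence no guarantee that the ``good'' case of the dichotomy ever occurs, is obtained.

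The paper closes this loop differently, and this is the idea you are missing: it uses a \emph{fixed} witness set $\mathcal X^\ast=\{x^\ast, x^\ast\pm re_i/n\}$, whose convex hull has volume ratio at least $[r/(nd\,\mathrm{diam}(\bar{\mathcal X}))]^d$ (Lemma~\ref{lem:vol}) and consists of $1/n$-near-optimal points. In the ``large gap'' case of each $\Cut$ iteration, Lemma~\ref{lem:geom} is applied with $\tilde z=F_{\mathcal X}(\tilde x)$ for \emph{every} $\tilde x\in\mathcal X^\ast$ (not just $x^\ast$): the relaxed hypothesis of the lemma tolerates $g(\tilde z)\leq f^\ast+1/n$, and the slack $B\geq 8/n$ absorbs this, so the empirical inner product $\langle \widehat{\nabla g_c}, z-F_{\mathcal X}(\tilde x)\rangle$ stays positive for all $2d+1$ witness points and the whole set $\mathcal X^\ast$ (hence $\mathrm{conv}(\mathcal X^\ast)$, by convexity of $\mathcal X'$) survives every cut with its volume unchanged. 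After roughly $d\log(nd\,\mathrm{diam}(\bar{\mathcal X})/r)/\log(10/9)$ cuts --- which is precisely how $\NF$ is chosen --- the $(9/10)^k$ decay of $\mathrm{vol}(\mathcal X_k)$ contradicts the fixed lower bound on $\mathrm{vol}(\mathrm{conv}(\mathcal X^\ast))$, forcing the alternative $f(\hat x)-f^\ast\leq B$ to hold at termination. To repair your proof you would need to replace your shrinking witness by this fixed near-optimal polytope (or otherwise prove a quantitative lower bound on how much volume around $x^\ast$ survives each cut, which your current argument does not provide).
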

This theorem is proved in Subsection~\ref{proof:thmain} and its proof is commented and explained therein. The regret depends only logarithmically on $\mathrm{diam}(\bar{\mathcal X})$ and on the diameter  $r$ of a ball centered around the minimum and contained in $\bar{\mathcal X}$ - which is not surprising and already observed in past works. Up to logarithmic terms, our regret here is of order $d^2/\sqrt{n}$ which slightly improves with respect to an adaptation of the best known bound in~\cite{lattimore2020improved} - which is derived for the more challenging problem of adversarial minimisation of the cumulative regret\footnote{We believe that our algorithm can be easily modified to accommodate cumulative regret in the stochastic case, and have a cumulative regret of order $d^2\sqrt{n}$. We however do not think that it could be easily adapted to the adversarial case.}, but which could translate in our setting as being of order $d^{2.5}/\sqrt{n}$. Beyond this slight improvement, the main strength of our approach is in terms of our algorithm and proof technique, which are - we believe - significantly simpler than existing results\footnote{However, while our algorithm is simple conceptually, it is extensive computationally as it requires an (approximate) computation of barycenters of successive convex sets, which is typically very costly.}. We hope that these techniques could maybe be refined to develop a tighter understanding of this problem, and evolve toward understanding the minimax regret in this problem.

\paragraph{Acknowledgements.} We would like to thank very warmly Evgenii Chzhen, Christophe Giraud, and Nicolas Verzelen for many insightful discussions on this problem, for their valuable opinion, and for their support without which this work would not have been written.\\
This work is partially supported by the Deutsche Forschungsgemeinschaft (DFG) CRC 1294 'Data Assimilation', Project A03, by the DFG Forschungsgruppe FOR 5381 "Mathematical Statistics in the Information Age - Statistical Efficiency and Computational Tractability", Project TP 02, by the Agence Nationale de la Recherche (ANR) and the DFG on the French-German PRCI ANR ASCAI CA 1488/4-1 "Aktive und Batch-Segmentierung, Clustering und Seriation: Grundlagen der KI".

\bibliographystyle{plain} 
\bibliography{lib}

\begin{thebibliography}{10}

\bibitem{agarwal2011stochastic}
Alekh Agarwal, Dean~P Foster, Daniel~J Hsu, Sham~M Kakade, and Alexander
  Rakhlin.
\newblock Stochastic convex optimization with bandit feedback.
\newblock {\em Advances in Neural Information Processing Systems}, 24, 2011.

\bibitem{akhavan2022gradient}
Arya Akhavan, Evgenii Chzhen, Massimiliano Pontil, and Alexandre Tsybakov.
\newblock A gradient estimator via l1-randomization for online zero-order
  optimization with two point feedback.
\newblock {\em Advances in Neural Information Processing Systems},
  35:7685--7696, 2022.

\bibitem{ayu1965algorithm}
Levin AYu.
\newblock On an algorithm for the minimization of convex functions.
\newblock In {\em Soviet Mathematics Doklady}, volume~6, pages 286--290, 1965.

\bibitem{bach2016highly}
Francis Bach and Vianney Perchet.
\newblock Highly-smooth zero-th order online optimization.
\newblock In {\em Conference on Learning Theory}, pages 257--283. PMLR, 2016.

\bibitem{bertsekas2011incremental}
Dimitri~P Bertsekas.
\newblock Incremental gradient, subgradient, and proximal methods for convex
  optimization: A survey.
\newblock 2011.

\bibitem{boucheron2003concentration}
St{\'e}phane Boucheron, G{\'a}bor Lugosi, and Olivier Bousquet.
\newblock Concentration inequalities.
\newblock In {\em Summer school on machine learning}, pages 208--240. Springer,
  2003.

\bibitem{bubeck2015convex}
S{\'e}bastien Bubeck et~al.
\newblock Convex optimization: Algorithms and complexity.
\newblock {\em Foundations and Trends{\textregistered} in Machine Learning},
  8(3-4):231--357, 2015.

\bibitem{bubeck2017kernel}
S{\'e}bastien Bubeck, Yin~Tat Lee, and Ronen Eldan.
\newblock Kernel-based methods for bandit convex optimization.
\newblock In {\em Proceedings of the 49th Annual ACM SIGACT Symposium on Theory
  of Computing}, pages 72--85, 2017.

\bibitem{fokkema2024online}
Hidde Fokkema, Dirk van~der Hoeven, Tor Lattimore, and Jack~J Mayo.
\newblock Online newton method for bandit convex optimisation.
\newblock {\em arXiv preprint arXiv:2406.06506}, 2024.

\bibitem{hazan2016introduction}
Elad Hazan et~al.
\newblock Introduction to online convex optimization.
\newblock {\em Foundations and Trends{\textregistered} in Optimization},
  2(3-4):157--325, 2016.

\bibitem{jamieson2012query}
Kevin~G Jamieson, Robert Nowak, and Ben Recht.
\newblock Query complexity of derivative-free optimization.
\newblock {\em Advances in Neural Information Processing Systems}, 25, 2012.

\bibitem{lattimore2020improved}
Tor Lattimore.
\newblock Improved regret for zeroth-order adversarial bandit convex
  optimisation.
\newblock {\em Mathematical Statistics and Learning}, 2(3):311--334, 2020.

\bibitem{lattimore2024bandit}
Tor Lattimore.
\newblock Bandit convex optimisation.
\newblock {\em arXiv preprint arXiv:2402.06535}, 2024.

\bibitem{nemirovskij1983problem}
Arkadij~Semenovi{\v{c}} Nemirovskij and David~Borisovich Yudin.
\newblock Problem complexity and method efficiency in optimization.
\newblock 1983.

\bibitem{newman1965location}
Donald~J Newman.
\newblock Location of the maximum on unimodal surfaces.
\newblock {\em Journal of the ACM (JACM)}, 12(3):395--398, 1965.

\bibitem{shamir2013complexity}
Ohad Shamir.
\newblock On the complexity of bandit and derivative-free stochastic convex
  optimization.
\newblock In {\em Conference on Learning Theory}, pages 3--24. PMLR, 2013.

\end{thebibliography}

\section{Proofs of the results in this paper}

\subsection{Proof of Theorem~\ref{thm:cut}}\label{proof:thmain}

Assume first that $\NF \leq  d \log(2/\delta)$. Then by definition of $\NF$ it means that $1 \leq d \log(2/\delta)/\NF$, so that the bound in Theorem~\ref{thm:cut} is trivially satisfied for any $\hat x \in \bar{\mathcal X}$. From now on, we therefore restrict to the converse case where $\NF \geq  d\log(2/\delta)/\NF$ - so that the second part of Lemma~\ref{lem:conc} can be applied to gradients constructed with $\NF$ points, as we do in our algorithm.


\paragraph{Step 1: Definition of a near-optimal set and lower bound on its volume.} Write
$$\mathcal X^* = \{x^*, x^*+re_i/n, x^*-re_i/n, i\leq d\}.$$
We first state a lemma ensuring that under Assumption Assumption~\ref{ass:bound}, the convex hull of $\mathcal X^*$ is in $\bar{\mathcal X}$, and that the volume ration between this convex hull and the volume of $\bar{\mathcal X}$ is lower bounded.
\begin{lemma}\label{lem:vol}
Assume that Assumption~\ref{ass:bound} holds. It holds that
$$\mathcal X^* \subset \mathrm{conv}(\mathcal X^*)\subset \bar{\mathcal X},$$
and
$$\frac{\mathrm{vol}(\mathrm{conv}(\mathcal X^*))}{\mathrm{vol}(\bar{\mathcal X})} \geq \Big[\frac{r}{nd\mathrm{diam}(\bar{\mathcal X}}\Big]^d.$$
\end{lemma}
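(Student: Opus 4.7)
The plan is to identify $\mathrm{conv}(\mathcal X^*)$ explicitly as a scaled cross-polytope centered at $x^*$, and then compare its volume to a simple enclosing-box bound on $\mathrm{vol}(\bar{\mathcal X})$.

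First I would observe that $\mathcal X^* \setminus \{x^*\}$ consists of the $2d$ vertices $x^* \pm (r/n) e_i$, so
$$\mathrm{conv}(\mathcal X^*) = x^* + (r/n)\, \mathbb B_1(0,1),$$
the $\ell_1$-ball (cross-polytope) of radius $r/n$ around $x^*$ (and in particular $x^*$ itself lies in this convex hull, which is consistent with $\mathcal X^* \subset \mathrm{conv}(\mathcal X^*)$). For the inclusion $\mathrm{conv}(\mathcal X^*) \subset \bar{\mathcal X}$, I would use $\|u\|_2 \leq \|u\|_1$ to get $\mathbb B_1(0,r/n) \subset \mathbb B_2(0,r/n) \subset \mathbb B_2(0,r)$ (the last step because $n \geq 1$), so that $\mathrm{conv}(\mathcal X^*) \subset \mathbb B_2(x^*,r) \subset \bar{\mathcal X}$ by Assumption~\ref{ass:bound}.

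For the volume ratio, the standard formula for the volume of the cross-polytope gives
$$\mathrm{vol}(\mathrm{conv}(\mathcal X^*)) = \frac{(2r/n)^d}{d!}.$$
For the denominator I would use the trivial enclosing bound: fixing any point of $\bar{\mathcal X}$ as origin, every other point is within $\ell_2$-distance (hence $\ell_\infty$-distance) at most $\mathrm{diam}(\bar{\mathcal X})$ of it, so $\bar{\mathcal X}$ sits in an $\ell_\infty$-box of side $2\mathrm{diam}(\bar{\mathcal X})$, yielding $\mathrm{vol}(\bar{\mathcal X}) \leq (2\mathrm{diam}(\bar{\mathcal X}))^d$.

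Combining the two bounds and using $d! \leq d^d$ gives
$$\frac{\mathrm{vol}(\mathrm{conv}(\mathcal X^*))}{\mathrm{vol}(\bar{\mathcal X})} \;\geq\; \frac{(2r/n)^d}{d!\,(2\mathrm{diam}(\bar{\mathcal X}))^d} \;\geq\; \frac{1}{d^d}\left(\frac{r}{n\,\mathrm{diam}(\bar{\mathcal X})}\right)^d \;=\; \left(\frac{r}{n\,d\,\mathrm{diam}(\bar{\mathcal X})}\right)^d,$$
which is the claimed bound. There is no real obstacle here; the only thing to be slightly careful about is that the factor $d$ inside the bound is obtained precisely from absorbing $d!$ via $d! \leq d^d$, and that the inclusions use $n\geq 1$ together with $\|\cdot\|_2 \leq \|\cdot\|_1$.
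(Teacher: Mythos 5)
Your proof is correct and follows essentially the same route as the paper: identify $\mathrm{conv}(\mathcal X^*)$ as the cross-polytope $\mathbb B_1(x^*, r/n)$, include it in $\mathbb B_2(x^*,r)\subset\bar{\mathcal X}$ via $\|\cdot\|_2\leq\|\cdot\|_1$ and Assumption~\ref{ass:bound}, and compare its volume to a diameter-based box bound on $\mathrm{vol}(\bar{\mathcal X})$. The only difference is constant bookkeeping: the paper uses the cruder lower bound $(r/(nd))^d$ together with a box of side $\mathrm{diam}(\bar{\mathcal X})$, while you keep the exact volume $(2r/n)^d/d!$ and a box of side $2\mathrm{diam}(\bar{\mathcal X})$; the factors of $2^d$ cancel and both yield the stated bound.
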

Note that by convexity of $f$ and by definition of $\mathcal X^*$, we have that for any $u\in \mathrm{conv}(\mathcal X^*)$, $f(u) - f^* \leq 1/n$. The above lemma lower bounds the volume ratio $\frac{\mathrm{vol}(\mathrm{conv}(\mathcal X^*))}{\mathrm{vol}(\bar{\mathcal X})}$.

\paragraph{Step 2: Results on $\FCP$.} The following result holds for algorithm $\FCP$.
\begin{proposition}\label{prop:cut}
    Assume that $\mathbb B_2(z_0, 2\Mc c) \subset \bar{\mathcal Z}^{\mathcal X} $. With probability larger than $1 - 4\delta$: $\FCP(z_0,\mathcal X, c,N,0,\delta)$ returns $z$ such that
    \begin{itemize}
    \item {\bf either} 
    $$g_{2c}(z) - g_c( z) \leq 2^{-3} (g_c( z) - f^*),$$ {\bf or} 
    $$g( z) - f^* \leq 2^{15} \et(\Mc/\delta)\log(2\Mc/\delta) \frac{1}{\sqrt{N}},$$
    \item $|g( z)  - \hat g_z| \leq \et(\Mc/\delta)/\sqrt{N}$,
    \item $\| z -  z_0\|_2 \leq 2\Mc c$,
    \item the total budget $T_{\FCP}$ used to find $z$ is smaller than $4 \Mc N$, so that
    $N \leq T_{\FCP} \leq 4 \Mc N$.
\end{itemize}
\end{proposition}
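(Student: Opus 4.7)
The plan is to split the proof into four parts: constructing a high-probability concentration event (covering both function-value estimators and an auxiliary binomial tail), bounds on budget/distance/recursion depth, a layer-cake estimate for $g_{2c}(z)-g(z)$, and a case analysis closing the dichotomy at termination.

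First I would build the good event by a union bound over all recursion rounds and all sampled points. Lemma~\ref{lem:conc} applied at confidence $\delta/(2\Mc)$ for each central estimator $\hat g_z$, and at confidence $\delta/(2^i i^2 \Mc)$ for each peripheral $\hat g_{z_j^{(i)}}$ (the latter matching exactly the $\et(2^i i^2 \Mc/\delta)$ factor inside the algorithm's trigger), controls every estimator's error. Separately, for each pair $(s,i)$ I would invoke a binomial-tail bound: if none of the $2^i$ iid uniform samples from $\mathbb B_2(2c)$ falls inside a set $A$ of relative volume $p$, then either a bad event of probability $\leq \delta'$ occurs or $p \leq \log(1/\delta')/2^i$. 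Taking $\delta' = \delta/(I_N\Mc)$ and combining all the pieces yields a total failure probability $\leq 4\delta$.

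Next I would handle budget, distance and recursion depth together. On the good event, the trigger $\hat g_{z_j^{(i)}}-\hat g_z \geq (17/16)^s/(16N) + 4\et\sqrt{i^2 2^i/N}$ combined with concentration errors of at most $2\et\sqrt{i^2 2^i/N}$ on the difference gives $g(z^{(s+1)}) - g(z^{(s)}) \geq (17/16)^s/(16N)$; telescoping yields $g(z^{(s)}) - f^* \geq ((17/16)^s-1)/N$, so the bound $g\leq 1$ forces $s \leq \Mc$. Each round uses $N + \sum_{i=1}^{I_N} 2^i \cdot (2^{-i}N/i^2) \leq 3N$ samples, totalling $\leq 4\Mc N$. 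Each recursion moves the candidate by at most $2c$ (the radius of the sampling ball), so $\|z-z_0\|_2 \leq 2\Mc c$, keeping all sampled points inside $\mathbb B_2(z_0,2\Mc c) \subset \bar{\mathcal Z}^{\mathcal X}$ and justifying the use of Lemma~\ref{lem:conc} throughout. Concentration of $\hat g_z$ itself yields the second bullet of the proposition.

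Finally, suppose $\FCP$ terminates at round $s$ returning $(z,\hat g_z)$. The no-trigger condition together with concentration yields $g(z_j^{(i)}) - g(z) \leq \tau_i := (17/16)^s/(16N) + 6\et\sqrt{i^2 2^i/N}$ for every sampled $z_j^{(i)}$, whence by the binomial-tail argument $\mathbb P_{Y \sim \mathcal U_{\mathbb B_2(2c)}}(g(z+Y)-g(z) > \tau_i) \leq L/2^i$ with $L = \log(I_N\Mc/\delta)$. Writing $g_{2c}(z) - g(z) \leq \int_0^1 \mathbb P(g(z+Y) - g(z) > \tau)\,d\tau$ and decomposing over the increasing thresholds $0<\tau_1<\cdots<\tau_{I_N}$, the key observation is that $\tau_{i+1}-\tau_i$ depends only on the $\et$-dependent part of $\tau_i$ (the constant $(17/16)^s/(16N)$ cancels in the increment), so that term appears only in $\tau_1$ and the remaining sum is $O(\et L/\sqrt N)$, giving
\[
g_{2c}(z)-g(z) \leq \tfrac{(17/16)^s}{16N} + O\!\Big(\tfrac{\et(\Mc/\delta)\log(\Mc/\delta)}{\sqrt N}\Big).
\]
Since $g_c \geq g$ (by convexity, so $g_{2c}-g_c \leq g_{2c}-g$ and $g_c - f^* \geq g - f^*$), it suffices for the first option to show $g_{2c}(z) - g(z) \leq 2^{-3}(g(z)-f^*)$. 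If instead $g(z)-f^* \leq 2^{15}\et(\Mc/\delta)\log(2\Mc/\delta)/\sqrt N$, the second option holds; otherwise the $\et$-term above is a tiny fraction of $g(z)-f^*$, and the $(17/16)^s/(16N)$ term splits into two subcases, $s \geq 12$ (where $(17/16)^s \leq 2((17/16)^s-1)$ combined with the telescoping lower bound absorbs it into $2^{-3}(g(z)-f^*)$) and $s \leq 11$ (where $(17/16)^s/(16N) \leq 1/(8N)$ is absorbed using the case-hypothesis $g(z)-f^* \gtrsim 2^{15}/\sqrt N \geq 2^{15}/N$). The hard part will be precisely this final arithmetic: the base $17/16$ (which provides a factor-two slack once $s\geq 12$), the factor $1/16$ in the threshold, and the constant $2^{15}$ in the second option are all tuned so that the three quantities $(17/16)^s/N$, $\et\log/\sqrt N$ and $g(z)-f^*$ close together. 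A secondary subtle point to be careful about is the cancellation of the constant part of $\tau_i$ in $\tau_{i+1}-\tau_i$, without which a spurious $\log$ factor would appear in front of $(17/16)^s/N$ in the layer-cake bound and break the arithmetic.
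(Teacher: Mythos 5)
Your proposal follows essentially the same route as the paper's proof: a per-round good event combining Lemma~\ref{lem:conc} with a binomial/tail bound for the $2^i$ uniform samples, the exponential-growth argument capping the number of recursive calls at $\Mc$ (hence the distance and budget bullets), and at termination a layer-cake bound $g_{2c}(z)-g(z)\leq \tfrac{(17/16)^s}{16N}+O\big(\et(\cdot)\log(\cdot)/\sqrt N\big)$ closed by the same case analysis (this is the content of Lemma~\ref{lem:inducFCP} and its use in the paper).

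The one concrete wrinkle is your invariant: telescoping the per-step gains only yields $g(z^{(s)})-f^*\geq \big((17/16)^s-1\big)/N$, which is weaker by a factor up to $2$ than the paper's invariant $g(z^{(s)})-f^*\geq (17/16)^s/N$. With that weaker bound your "$s\geq 12$" subcase gives $\tfrac{(17/16)^s}{16N}\leq \tfrac18\,(g(z)-f^*)$, and adding the $\et$-term (at best $\tfrac1{16}(g(z)-f^*)$, given the gap $2^{11}$ vs.\ $2^{15}$) yields $\tfrac{3}{16}(g(z)-f^*)$, which overshoots the required $2^{-3}$. The paper avoids this by treating the step $s=0$ separately (first bullet of Lemma~\ref{lem:inducFCP}): since the trigger threshold already contains $4\et(2^i i^2\Mc/\delta)\sqrt{i^2 2^i/N}\geq 2\cdot(17/16)/N$, the very first selected point gains at least $(17/16)/N$ regardless of $g(z_0)$, so the stronger invariant $(17/16)^s/N$ propagates and the final split $\tfrac1{16}+\tfrac1{16}=2^{-3}$ closes exactly. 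Incorporating that observation (or retuning your constants accordingly) repairs the arithmetic; the rest of your argument, including the cancellation of the constant part of $\tau_i$ in the increments and the accounting of confidence levels matching the $\et(2^i i^2\Mc/\delta)$ factor in the algorithm, matches the paper.
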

The main idea behind this result is that on a high probability event:
\begin{itemize}
    \item if a point $z_j^{(i)}$ is selected for being a candidate for a cutting point, then it means that $g(z_j^{(i)})$ is larger than a quantity growing exponentially with the number of iterations $s$. As the range of $g$ is bounded on $\mathcal Z^{\mathcal X}$, this means that the number of recursive calls to $\FCP$ should be logarithmically bounded - hence the bound on $\|z - z_0\|_2$ and the bound on the number of samples used.
    \item if none of the $z_j^{(i)}$ is selected for being a candidate for a cutting point, then it either means that (i) they are all small, and as they are representative of the average value of $g$ on $\mathbb B_2(z,2c)$, then $g_{2c}(z)$ will be small enough to satisfy our condition in Lemma~\ref{lem:geom}, or (ii) that $g(z)$ is already very small.
\end{itemize}

\paragraph{Step 3: Results on a single run of $\Cut$.} We now state the following lemma that describes the high probability behaviour of $\Cut$, provided that it is given a reasonable set of parameters. Set 
$$B = \Big[2^{16} \et(\Mc/\delta)\log(2\Mc/\delta) \frac{1}{\sqrt{\Nc}}\Big] \lor 
 \Big[32\et(2d/\delta)  \frac{d}{c\sqrt{\NF}}\Big] \lor (8/n).$$  
\begin{lemma}\label{lem:recFCP}
    Assume that $\Cut$ is given a convex set $\mathcal X \subset \bar{\mathcal X}$, $\hat x \in \bar{\mathcal X}, \hat f \in \mathbb R, \delta>0$ such that:
    \begin{itemize}
        \item $|f(\hat x) - \hat f| \leq \et( \Mc/\delta)/\sqrt{\Nc}$
        \item {\bf either} $\mathcal X^* \subset \mathcal X$, {\bf or} $f (\hat x) - f^* \leq B.$
    \end{itemize}
    There exists an event of probability larger than $1-5\delta$ such that
    \begin{itemize}
    \item $\mathcal X'\subset \mathcal X$ is convex
    \item $|f(\hat x') - \hat f'| \leq \et(\Mc /\delta)/\sqrt{\Nc}$,
        \item {\bf either} $\Big[$ $\mathcal X^* \subset \mathcal X' \subset \mathcal X$ and $\mathrm{vol}(\mathcal X') \leq \frac{9}{10}\mathrm{vol}(\mathcal X')$ $\Big]$, {\bf or} $ f (\hat x') - f^* \leq B$,
        \item the total budget $T_{\Cut}$ used to run $\Cut$ until the next recursive call of $\Cut$ is such that
        $\Nc + \NF \leq T_{\Cut} \leq 4\Mc \Nc + \NF$.
    \end{itemize}
\end{lemma}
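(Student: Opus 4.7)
The plan is to work on the intersection of two high-probability events: the guarantees of Proposition~\ref{prop:cut} for the call of $\FCP$ at line~1 of $\Cut$ (probability $\geq 1 - 4\delta$), and the gradient-concentration bound of Lemma~\ref{lem:conc} applied to $\widehat{\nabla g_c}$ for the directions $u = z - F_{\mathcal X}(\tilde x)$, $\tilde x \in \mathcal X^*$, union-bounded with confidence $\delta/(2d)$ per direction so the resulting $\et(2d/\delta)$ factor is exactly the one appearing in $B$. The total failure probability is then at most $5\delta$. On this event, convexity of $\mathcal X' \subset \mathcal X$ is immediate (intersection of convex sets), the budget identity $T_{\Cut} = T_{\FCP} + \NF$ combined with Proposition~\ref{prop:cut} yields the bracket $[\Nc + \NF,\ 4\Mc\Nc + \NF]$, and the concentration $|f(\hat x') - \hat f'| \leq \et(\Mc/\delta)/\sqrt{\Nc}$ follows from a case split on whether step~2 of $\Cut$ replaced $(\hat x, \hat f)$: in the replacement case use $f(F^{-1}_{\mathcal X}(z)) = g(z)$ together with the second bullet of Proposition~\ref{prop:cut}, otherwise use the hypothesis on $(\hat x, \hat f)$.

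The heart of the proof is the dichotomy ``volume halves vs.\ already $B$-good''. I split on the two sub-cases of Proposition~\ref{prop:cut}. In the ``value'' sub-case, $g(z) - f^* \leq 2^{15}\et(\Mc/\delta)\log(2\Mc/\delta)/\sqrt{\Nc} \leq B/2$, and a short chain through the update rule plus concentration of $\hat g_z, \hat f$ gives $f(\hat x') - f^* \leq B$ regardless of which branch of the hypothesis we are in. In the ``gradient'' sub-case, $g_{2c}(z) - g_c(z) \leq 2^{-3}(g_c(z) - f^*)$, I argue by contradiction: assume $f(\hat x') - f^* > B$. Since $B \gg \et(\Mc/\delta)/\sqrt{\Nc}$, the decision at step~2 combined with the concentration of $\hat g_z, \hat f$ forces $g(z) - f^* > B/2$, and the hypothesis forces the $\mathcal X^* \subset \mathcal X$ branch. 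For each $\tilde x \in \mathcal X^*$ set $\tilde z = F_{\mathcal X}(\tilde x)$; convexity of $f$ on $\mathrm{conv}(\mathcal X^*)$ combined with Lemma~\ref{lem:vol} gives $g(\tilde z) \leq f^* + 1/n$, and since $B \geq 8/n$ the condition of Lemma~\ref{lem:geom} is satisfied with this $z, \tilde z$, yielding $\langle \nabla g_c(z), z - \tilde z\rangle \geq (3/4)(g_c(z) - g(\tilde z)) \gtrsim B$.

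It remains to show that the empirical gradient agrees in sign with the true gradient on each direction $z - \tilde z$. Lemma~\ref{lem:conc} bounds the noise by $\et(2d/\delta)\|z - \tilde z\|_2 \sqrt{d}/(c\sqrt{\NF})$; using $\|z\|_2 \leq 2\Mc c$ (from Proposition~\ref{prop:cut}) and a bound $\|F_{\mathcal X}(\tilde x)\|_2 = O(\sqrt{d})$ coming from the $\frac{1}{\sqrt{d}}$-isotropic normalization of $\mathcal Z$, this noise is of order $\et(2d/\delta)d/(c\sqrt{\NF})$, strictly smaller than the lower bound $\gtrsim B$ of the previous paragraph thanks to $B \geq 32\et(2d/\delta)d/(c\sqrt{\NF})$. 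Hence $\langle \widehat{\nabla g_c}, z - \tilde z\rangle > 0$, which means $\tilde z$ lies in the retained half-space and $\tilde x \in \mathcal X'$; thus $\mathcal X^* \subset \mathcal X'$. The volume bound $\mathrm{vol}(\mathcal X') \leq (9/10)\mathrm{vol}(\mathcal X)$ then follows from a quantitative near-centroid version of Gr\"unbaum's inequality: the cut point $z$ satisfies $\|z\|_2 \leq 2\Mc c = 1/(4e\sqrt{d})$, which is an $O(1)$ offset from the centroid in the $\frac{1}{\sqrt{d}}$-isotropic normalization, so each side of $\mathcal Z$ (hence of $\mathcal X$ under $F_{\mathcal X}^{-1}$) retains at least a $1/10$ fraction of the volume.

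The main obstacle is the gradient sub-case: threading Lemma~\ref{lem:geom}'s correlation bound through Lemma~\ref{lem:conc}'s noise while respecting the constants packed into $B$ requires careful bookkeeping, in particular to absorb the $O(1/n)$ slack coming from the definition of $\mathcal X^*$ and the $O(\et/\sqrt{\Nc})$ slack between $\hat g_z$, $\hat f$, $g(z)$ and $f(\hat x)$ in the update step. A secondary technical point is the quantitative Gr\"unbaum-type inequality for hyperplanes not passing exactly through the centroid, which is classical but whose explicit constants must be tracked to land at the stated $9/10$ factor.
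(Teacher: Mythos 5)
Your proposal is correct and follows essentially the same route as the paper: the same $4\delta+\delta$ event decomposition (Proposition~\ref{prop:cut} for the $\FCP$ call plus a union-bounded application of Lemma~\ref{lem:conc} along the directions $z - F_{\mathcal X}(\tilde x)$, $\tilde x\in\mathcal X^*$), the same use of Lemma~\ref{lem:geom} with the $1/n$ near-optimality of $\mathcal X^*$, the KLS bound for $\|z-F_{\mathcal X}(\tilde x)\|_2$, and the near-centroid Gr\"unbaum-type cut with $\|z\|_2\le 2\Mc c=1/(4e\sqrt d)$ for the volume reduction (the paper's Proposition~\ref{prop:approxiso} and Corollary~\ref{cor:approxiso}). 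Your contrapositive framing of the ``$B$-good vs.\ cut'' dichotomy and the $O(\et/\sqrt{\Nc})$ update-slack you flag correspond to the paper's Case 1/Case 2 split and are handled there with the same level of constant bookkeeping.
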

This lemma ensures that, provided that $\Cut$ is initialised properly, the convex set $\mathcal X'$ obtained after running $\Cut$ satisfies 
\begin{itemize}
    \item {\bf either} it contains $\mathcal X^*$, and its volume is a fraction of the volume of $\mathcal X$,
    \item {\bf or} $f$ measured at the current estimator of the minimum $\hat x'$ is already quite small.
\end{itemize}
The idea behind the proof of this lemma is that whenever $f(\hat x)- f^*$ is not too small, then by Proposition~\ref{prop:cut}, $\FCP$ will return with high probability a cutting point $z$ that satisfies the requirements in Lemma~\ref{lem:geom} - so that $\nabla g_c(z)$ is negatively correlated with $x^* - z$, and can therefore be used to cut the space $\mathcal X$. Also by Proposition~\ref{prop:cut}, with high probability $z$ is such that $\|z\|_2$ is small so that cutting the space according to this approximate center of gravity still preserves the nice property about exponentially fast volume reduction.

\paragraph{Step 4: Induction on several runs of $\Cut$.}  Based on this lemma, we proceed by induction over the repeated recursive runs of $\Cut$ after being called by $\algo$, conditioning over the high probability event of Lemma~\ref{lem:recFCP} where the condition for the next run are ensured. Our induction hypothesis $H_t$ is: on an event $\xi_t$ of probability larger than $1-5t\delta$, if $\Cut$ is called for the $t$ time, it takes as parameter a convex set $\mathcal X \subset \bar{\mathcal X}$, $\hat x \in \bar{\mathcal X}, \hat f \in \mathbb R$ such that:
    \begin{itemize}
        \item $|f(\hat x) - \hat f| \leq \et( \Mc/\delta)/\sqrt{\Nc}$
        \item {\bf either} $\mathcal X^* \subset \mathcal X$, {\bf or} $f (\hat x) - f^* \leq B.$
        \item the total budget $n_t$ used up to the $t$-th call of $\Cut$ is such that $(t-1)\Nc+ t\NF \leq n_t \leq 4(t-1)\Mc\Nc+ t\NF$.
    \end{itemize}
We prove this by induction:
\begin{itemize}
    \item Proof of $H_1$: Note first that by Lemma~\ref{lem:conc} and Lemma~\ref{lem:vol}, the conditions of Lemma~\ref{lem:recFCP} are satisfied after the initialisation phase of $\algo$ on an event of probability $1-\delta$. Moreover the running time of the initialisation is $\NF$. So $H_1$ holds.
    \item Proof of $H_{t+1}$ assuming that $H_t$ holds: assuming that $H_t$ holds for a given $t$, we have by Lemma~\ref{lem:recFCP} that $H_{t+1}$ holds on an event $\xi$ of probability larger than $1-\delta$, conditional on $\xi_t$. So writing $\xi_{t+1} = \xi_t \cap \xi$, we have proven that $H_{t+1}$ holds.
\end{itemize}
So for any given $t \geq 0$, on an event of probability larger than $1-5t\delta$
 if $\Cut$ is called for the $t$ time, it takes as parameter a convex set $\mathcal X \subset \bar{\mathcal X}$, $\hat x \in \bar{\mathcal X}, \hat f \in \mathbb R$ such that:
    \begin{itemize}
        \item $|f(\hat x) - \hat f| \leq \et(\Mc/\delta)/\sqrt{\Nc}$,
        \item {\bf either} $\Big[$ $\mathcal X^* \subset \mathcal X$ and $\mathrm{vol}(\mathcal X) \leq \Big(\frac{9}{10}\Big)^{t-1}\mathrm{vol}(\bar{\mathcal X})$ $\Big]$, {\bf or} $f (\hat x) - f^* \leq B.$
        \item the total budget $n_t$ used up to the $t$-th call of $\Cut$ is such that $t\NF \leq n_t \leq 2t\NF$ - since $4\Mc\Nc = \NF$. 
    \end{itemize}

\paragraph{Step 5: Application of the result of the induction to what happens at the end of the algorithm.} The induction from {\bf Step 4} applied to $t = \MF/5$ implies that, on an event of probability larger than $1 - 5(n/\NF)\delta = 1-\MF \delta$ - that we will write $\xi_{\mathrm{term}}$ - the algorithm $\algo$ terminates after at least $n/(2\NF)$ rounds, and at most $n/\NF$ rounds, and at its termination round, the current convex set $\mathcal X \subset \bar{\mathcal X}$, and the current value $\hat x$ (that $\algo$ will output as it is the last round) are such that 
\begin{itemize}
    \item {\bf either} $\Big[$ $\mathcal X^* \subset \mathcal X$ and $\mathrm{vol}(\mathcal X) \leq \Big(\frac{9}{10}\Big)^{n/(2\NF)-1}\mathrm{vol}(\bar{\mathcal X})$ $\Big]$,
    \item {\bf or} $f (\hat x) - f^* \leq B$.
\end{itemize}
If $f (\hat x) - f^* \leq B$, the proof is finished. So assume that on $\xi_{\mathrm{term}}$, we have $\mathcal X^* \subset \mathcal X$ and $\mathrm{vol}(\mathcal X) \leq \Big(\frac{9}{10}\Big)^{n/(2\NF)-1}\mathrm{vol}(\bar{\mathcal X})$. Note that as $\mathcal X$ is convex, we have $\mathrm{conv}(\mathcal X^*) \subset \mathcal X$ on $\xi_{\mathrm{term}}$.

By definition of $\NF$, we have that
$$\Big(\frac{9}{10}\Big)^{n/(2\NF)-1} \leq \Big[\frac{r}{nd\mathrm{diam}(\bar{\mathcal X})}\Big]^d.$$
So by Lemma~\ref{lem:vol}, have a contradiction on $\xi_{\mathrm{term}}$: $\mathrm{conv}(\mathcal X^*) \subset \mathcal X$, but $\mathrm{vol}(\mathrm{conv}(\mathcal X^*))\geq \mathrm{vol}( \mathcal X)$. So it means that on $\xi_{\mathrm{term}}$, we must have $f (\hat x) - f^* \leq B$.


\subsection{Proof of Proposition~\ref{prop:cut}}

In what follows write $\Mc:= \Mc(N)$. We first state the following lemma.
\begin{lemma}\label{lem:inducFCP}
    Assume that Assumption~\ref{ass:bound} holds. Consider $s\geq 0$ and $z\in \mathbb R^d$ such that $\mathbb B_2(z, 2c) \in \bar{\mathcal Z}^{\mathcal X}$.  There exists an event of probability larger than $1-3\delta$ such that the following holds on it, during a run of $\FCP(z, \mathcal X, c,N,s,\delta)$:
    \begin{itemize}
    \item  Assume that $g(z) - f^* \leq \frac{1}{N}$ and $s = 0$. For any $z_j^i$ such that 
    $$\hat g_{z_j^{(i)}}  - \hat g_z \geq  \frac{1}{16N}+4\et(2^i i^2/\delta) \sqrt{\frac{i^2 2^i}{N}},$$
    then since $2\et(2^i i^2/\delta) \sqrt{\frac{i^2 2^i}{N}} \geq \frac{(17/16)}{N}$, we have
    $$g(z_j^{(i)}) - f^*\geq  \frac{(17/16)}{N}.$$
        \item Assume that $g(z) - f^* \geq \frac{(17/16)^s}{N}$. For any $z_j^i$ such that 
    $$\hat g_{z_j^{(i)}}  - \hat g_z \geq  \frac{(17/16)^s}{16N}+4\et(2^i i^2/\delta) \sqrt{\frac{i^2 2^i}{N}},$$
    it holds that
    $$g(z_j^{(i)}) - f^* \geq  \frac{(17/16)^{s+1}}{N}.$$
    \item Assume that $g_c(2z) - g_c(z) \geq 2^{-3}(g_c(z) - f^*)$, and $g(z) - f^* > \frac{(17/16)^s}{N} \lor \Big[ 2^{15} \et(1/\delta)\log(2/\delta) \frac{1}{\sqrt{N}}\Big]$.
 Then there exists $z_j^i$ such that
 $$\hat g_{z_j^{(i)}}  - \hat g_z \geq  \frac{(17/16)^s}{16N}+4\et(2^i i^2/\delta) \sqrt{\frac{i^2 2^i}{N}}.$$
    \end{itemize}
\end{lemma}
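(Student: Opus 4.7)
The argument rests on a single good event $\xi$ of probability at least $1-3\delta$, obtained as the intersection of three concentration bounds: (a) $|\hat g_z - g(z)| \leq \et(1/\delta)/\sqrt{N}$, from Lemma~\ref{lem:conc}; (b) $|\hat g_{z_j^{(i)}} - g(z_j^{(i)})| \leq \et(2^i i^2/\delta)\sqrt{i^2 2^i/N}$ simultaneously for every $i \leq I_N$ and $j \leq 2^i$, from Lemma~\ref{lem:conc} applied with per-point failure probability of order $\delta/(2^i i^2)$ and a union bound over $(i,j)$ (using $\sum_i 1/i^2 < \infty$); and (c) $\bigl|\frac{1}{2^i}\sum_j g(z_j^{(i)}) - g_{2c}(z)\bigr| \leq \et(I_N/\delta)/\sqrt{2^i}$ for every $i \leq I_N$, by Hoeffding's inequality on the $2^i$ i.i.d.~samples drawn from $z+\mathcal U_{\mathbb B_2(2c)}$ followed by a union bound over $i$. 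A union bound over (a), (b), (c) gives $\mathbb P(\xi) \geq 1 - 3\delta$, and I work on $\xi$ throughout.

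For the first two parts, I fix any $z_j^{(i)}$ passing the selection threshold
\[
\hat g_{z_j^{(i)}} - \hat g_z \geq \frac{(17/16)^s}{16N} + 4\et(2^i i^2/\delta)\sqrt{\frac{i^2 2^i}{N}}.
\]
Subtracting noise using (a) and (b), and exploiting the monotonicity of $\et$ and of $\sqrt{i^2 2^i/N}$ in $i$, yields
\[
g(z_j^{(i)}) - g(z) \geq \frac{(17/16)^s}{16N} + 2\et(2^i i^2/\delta)\sqrt{\frac{i^2 2^i}{N}}.
\]
An elementary check shows that $2\et(2^i i^2/\delta)\sqrt{i^2 2^i/N} \geq (17/16)/N$ for every $i \geq 1$ (this is the inequality flagged in the lemma statement); combined with $g(z) - f^* \geq 0$ it settles Part 1. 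Part 2 follows by additionally using the hypothesis $g(z) - f^* \geq (17/16)^s/N$ and the identity $(17/16)^s(1 + 1/16) = (17/16)^{s+1}$.

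For Part 3, set $\tau := g(z) - f^*$. The Jensen inequality $g_c(z) \geq g(z)$ (valid since $g$ is convex and the smoothing kernel is symmetric around $0$) combined with the hypothesis $g_{2c}(z) - g_c(z) \geq 2^{-3}(g_c(z) - f^*)$ yields $g_{2c}(z) - g(z) \geq \tau/8$. I then pick a level $i^* \in \{1,\ldots,I_N\}$ for which $2^{i^*}$ is of order $(\et/\tau)^2$ up to polylogarithmic factors, calibrated so that the Hoeffding deviation $\et(I_N/\delta)/\sqrt{2^{i^*}}$ is below $\tau/32$ while the per-point noise $\et(2^{i^*}(i^*)^2/\delta)\sqrt{(i^*)^2 2^{i^*}/N}$ stays below $\tau/100$. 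By (c) at level $i^*$, the empirical mean of $g$ over the $2^{i^*}$ samples is at least $g(z) + \tau/8 - \tau/32 \geq g(z) + \tau/16$, so a pigeonhole argument produces some $j$ with $g(z_j^{(i^*)}) - g(z) \geq \tau/16$. Translating back to the estimators via (a) and (b) and using $\tau \geq (17/16)^s/N$ to absorb the additive $(17/16)^s/(16N)$ term shows that this $j$ passes the selection threshold.

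The main obstacle is the calibration of $i^*$ in Part 3: $2^{i^*}$ must be large enough for the Hoeffding deviation to concentrate the empirical mean within $O(\tau)$ of $g_{2c}(z)$, but small enough for the per-point noise $\et\sqrt{i^2 2^i/N}$ to remain below $O(\tau)$. The quantitative content of the lower bound $\tau \geq 2^{15}\et(1/\delta)\log(2/\delta)/\sqrt{N}$ in the hypothesis is precisely that these two opposing constraints are simultaneously satisfiable within the admissible range $i^* \leq I_N$. Everything else reduces to routine concentration bookkeeping and arithmetic around the selection threshold.
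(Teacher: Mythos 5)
Your treatment of the first two bullet points is correct and essentially identical to the paper's (define the good event via Lemma~\ref{lem:conc} plus a union bound over the $(i,j)$, subtract the noise from the selection inequality, and use $g(z)\geq f^*$ resp.\ $g(z)-f^*\geq (17/16)^s/N$ together with $(17/16)^s(1+1/16)=(17/16)^{s+1}$). The genuine gap is in Part 3: your single-scale calibration of $i^*$ is infeasible under the stated hypothesis. To force the Hoeffding deviation of the level-$i^*$ empirical mean below $\tau/32$ you need $2^{i^*}\gtrsim \log(1/\delta)/\tau^2$; but each candidate at level $i$ receives only $2^{-i}N/i^2$ evaluations, so the per-point noise, and more importantly the selection threshold $4\et(2^{i^*}(i^*)^2/\delta)\sqrt{(i^*)^2 2^{i^*}/N}$ itself, is then of order $\et\, i^*\sqrt{\log(1/\delta)}/(\tau\sqrt N)$ at least. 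Demanding that this be $\leq \tau/100$ forces $\tau\gtrsim N^{-1/4}$ up to polylogarithmic factors, whereas the hypothesis only guarantees $\tau> 2^{15}\et(1/\delta)\log(2/\delta)/\sqrt N$. In the main regime $\tau\asymp \et(1/\delta)\log(2/\delta)/\sqrt{N}$ no admissible $i^*$ exists: the scale at which the mean concentrates to within $O(\tau)$ has $2^{i^*}\sim N/\mathrm{polylog}(N)$, each candidate then gets only polylogarithmically many samples, and the threshold is of constant order, so even a candidate whose true value is large by a constant cannot be certified. A concrete obstruction: let $g(z+Z)-g(z)$ equal $1$ with probability $\approx\tau$ under $Z\sim\mathcal U_{\mathbb B_2(2c)}$ (mass in the annulus between radii $c$ and $2c$) and $0$ otherwise; the Part-3 hypotheses can hold, yet at your $i^*$ the threshold exceeds the range of $\hat g$, so the pigeonhole point never passes. (Even where your calibration is feasible the constants do not close: you need roughly the mean-deviation plus five times the per-point noise to be below $\tau/16$, and $\tau/32+5\tau/100>\tau/16$; and your event (a)+(b)+(c) already costs more than $3\delta$.)

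The paper avoids this by never asking for one scale at which both the mean concentrates and the threshold is $O(\tau)$; instead it couples the deviation level to the scale and argues by contradiction. Set $\Delta_i=\frac{(17/16)^s}{16N}+6\et(2^i i^2/\delta)\sqrt{i^2 2^i/N}$. If for every $i\leq I_N$ one has $\mathbb P_{Z\sim\mathcal U_{\mathbb B_2(2c)}}\big(g(z+Z)-g(z)\geq\Delta_i\big)\leq 8\log(2/\delta)/2^i$, then a layer-cake summation over all scales bounds $g_{2c}(z)-g(z)-\frac{(17/16)^s}{16N}$ by $\sum_{i\leq I_N}\Delta_{i+1}\cdot 8\log(2/\delta)2^{-i}\lesssim 2^{11}\et(1/\delta)\log(2/\delta)/\sqrt N$, because $\Delta_i 2^{-i}\sim 2^{-i/2}/\sqrt N$ is summable; this contradicts $g_{2c}(z)-g_c(z)\geq 2^{-3}(g_c(z)-f^*)$ once $\tau>2^{15}\et(1/\delta)\log(2/\delta)/\sqrt N$. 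Hence some scale $i$ has a large tail at its own level $\Delta_i$, and a binomial concentration bound guarantees that one of the $2^i$ sampled points exceeds $\Delta_i$ in true value, which by construction of $\Delta_i$ passes the empirical threshold on the good event. This multi-scale tail-integration is the missing idea; with it, the rest of your bookkeeping for Parts 1 and 2 stands as written.
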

Assume that $\mathbb B_2(z_0, 2\Mc c) \subset \bar{\mathcal Z}^{\mathcal X}$. Then by construction we know that even if $\FCP$ calls itself recursively for $\Mc$ rounds, then all the parameters $z$ that it will take at each round will be such that $\mathbb B_2(z, 2 c) \subset \bar{\mathcal Z}^{\mathcal X}$. Write $\tau$ for the random round where the recursive application of $\FCP(z_0, \mathcal X, c,N,0,\delta)$ stops. Applying Lemma~\ref{lem:inducFCP}, we know that on an event of probability larger than $1-3\Mc \delta$, for the point $z$ taken as input at round $\tau \land \Mc$:
\begin{itemize}
    \item if $\tau < \Mc$ we have either $$g_{2c}(z) - g_c(z) \leq 2^{-3}(g(z) - f^*),$$
or 
$$g(z) - f^* \leq  2^{15} \et(1/\delta)\log(2/\delta) \frac{1}{\sqrt{N}}.$$
This concludes the proof in this case.
\item otherwise if $\tau \geq \Mc$ then
$$g(z) - f^* \geq \frac{(17/16)^{\lfloor \Mc\rfloor}}{N}.$$
Note that by definition of $\Mc$ this implies $g(z) - f^* \geq 2$ which contradicts our Assumption~\ref{ass:bound}. So this case cannot happen. This concludes the proof in this case as well.
\end{itemize}

\subsection{Proof of Lemma~\ref{lem:recFCP}}

We first remind the following classical results of convex geometry.
\begin{proposition}[KLS Lemma]\label{lem:KLS}
    Let $\mathcal C$ be a convex in isotropic position. It holds that
    $$\mathbb B_2(1) \subset \mathcal C \subset \mathbb B_2(2d).$$
\end{proposition}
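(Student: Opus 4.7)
The plan is to reduce both inclusions, via Brunn's concavity theorem, to one-dimensional extremal problems for the marginals of $\mathcal U_{\mathcal C}$. For any unit vector $\theta$, set $\phi_\theta(t) = \mathrm{vol}_{d-1}\bigl(\mathcal C \cap \{x : \langle x,\theta\rangle = t\}\bigr)$; by Brunn's theorem, $\phi_\theta^{1/(d-1)}$ is concave on its support $[-a,r]$, and by the isotropy and centroid-at-origin assumptions the normalised density $\phi_\theta/\mathrm{vol}(\mathcal C)$ has mean $0$ and variance $1$.

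For the outer inclusion, pick $v\in \mathcal C$ with $\|v\|=R$ and take $\theta = v/R$, so that the right endpoint of the marginal's support satisfies $r\geq R$. I would show that, over the class of $\phi$ on $[-a,r]$ with $\phi^{1/(d-1)}$ concave and the two moment constraints, $r$ is maximised by the ``simplex'' density $\phi(t)\propto (r-t)^{d-1}$; an elementary computation against this extremal shape gives $r = da$ from the mean condition and then $r = \sqrt{d(d+2)}$ from the variance condition. Since $\sqrt{d(d+2)}\leq 2d$ for every $d\geq 1$, this yields $R\leq 2d$.

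For the inner inclusion, support-function duality gives the equivalence $\mathbb B_2(1)\subset \mathcal C \iff h_{\mathcal C}(\theta) := \max_{x\in \mathcal C}\langle x,\theta\rangle\geq 1$ for every unit $\theta$ (since for unit $\theta$, $\max_{\|u\|\leq 1}\langle u,\theta\rangle = 1$). I would run the dual extremal problem, minimising $r$ over the same admissible class; the extremum is attained by $\phi(t)\propto (t+a)^{d-1}$ (mass pushed toward the right endpoint, long left tail to balance the mean), for which the two moment conditions yield $a = dr$ and $r = \sqrt{(d+2)/d}\geq 1$.

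The main obstacle is justifying the reduction to the affine $\phi^{1/(d-1)}$ extremum: given any admissible concave $g = \phi^{1/(d-1)}$, one must exhibit a linear competitor matching the mean and variance whose support endpoint is further right (resp.\ left). I would carry this out by a standard two-parameter moment-matching rearrangement (cf.\ the Hensley--KLS one-dimensional lemma in~\cite{kannan1995isoperimetric}), exploiting that only moments of degree $\leq 2$ appear in the constraints and that linear functions form an extreme face of the concave cone.
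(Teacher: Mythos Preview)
The paper does not provide its own proof of this proposition: it is introduced with the phrase ``We first remind the following classical results of convex geometry'' and is used as a black-box input to the analysis of $\Cut$. There is therefore no in-paper argument to compare against.

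Your outline is exactly the standard route to these bounds (the one-dimensional reduction of Kannan--Lov\'asz--Simonovits, building on Hensley), and the numerics you obtain are the sharp constants: the extremal affine-root marginals give outer radius $\sqrt{d(d+2)}\leq 2d$ and inner radius $\sqrt{(d+2)/d}\geq 1$, which is precisely what the proposition asserts in weakened form. The only place that still needs genuine work is the step you flag yourself: the reduction to the affine $\phi^{1/(d-1)}$ extremum is not a two-parameter moment match in the naive sense, because one must argue that replacing a general concave $g$ by a linear one does not decrease the right endpoint while preserving the mean and variance constraints. The clean way to do this is to compare a general log-concave (or $1/(d-1)$-concave) density to the specific cone density at the same right endpoint $r$ and use the single-crossing property of such families to control the first two moments; this is the content of Lemma~5.4 (and its proof via Lemma~5.6) in the original KLS paper. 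As written, your ``extreme face of the concave cone'' sentence is suggestive but not yet a proof of that step.
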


\begin{proposition}[Approximate barycentric cutting of an isotropic convex]\label{prop:approxiso}
    Let $\mathcal C$ be a convex in isotropic position. It holds for any $u \in \mathbb R^d: u\neq 0$, and any $z \in \mathbb R^d$
    $$\mathrm{vol}(\mathcal C \cap \{w: \langle w-z, u\rangle \geq 0\}) \geq (1/e - \|z\|_2)\mathrm{vol}(\mathcal C).$$
\end{proposition}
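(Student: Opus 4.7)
The plan is to split the estimate into two pieces: a Grünbaum-type inequality for the hyperplane cut through the centroid, and a one-dimensional slab bound that accounts for the offset $z$.

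First, without loss of generality rescale so that $u$ is a unit vector, and set $t = \langle z, u\rangle$; then $|t|\leq \|z\|_2$ by Cauchy-Schwarz, and the halfspace in question is exactly $\{w : \langle w, u\rangle \geq t\}$. Since $\mathcal C$ is in isotropic position its centroid lies at the origin, so the classical Grünbaum theorem applied to the hyperplane through $0$ perpendicular to $u$ gives $\mathrm{vol}(\mathcal C\cap\{\langle w,u\rangle \geq 0\}) \geq \mathrm{vol}(\mathcal C)/e$. Writing $A(s) = \mathrm{vol}_{d-1}(\mathcal C\cap\{\langle w,u\rangle = s\})$ for the cross-section area, and assuming WLOG $t\geq 0$ (the case $t<0$ is symmetric and only improves the estimate), the target volume equals $\int_0^\infty A(s)\,ds - \int_0^t A(s)\,ds$, so the proposition reduces to the slab bound
\[
\int_0^t A(s)\,ds \;\leq\; t\cdot \mathrm{vol}(\mathcal C).
\]

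To establish this slab bound I would pass to the one-dimensional marginal density $h(s) := A(s)/\mathrm{vol}(\mathcal C)$. By the Brunn-Minkowski inequality, $s\mapsto A(s)^{1/(d-1)}$ is concave on its support, so $h$ is log-concave on $\mathbb R$; moreover $h$ has mean $0$ (the centroid of $\mathcal C$ projects to $0$) and unit variance (by isotropy). The slab bound then reduces to $\|h\|_\infty \leq 1$, which is a classical $L^\infty$ estimate for centered isotropic log-concave densities on $\mathbb R$, provable from log-concavity together with the inclusion $\mathbb B_2(1)\subset \mathcal C$ of Proposition~\ref{lem:KLS} (which forces enough spread of $\mathcal C$ in every direction and prevents concentration of $h$). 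Combining this with the Grünbaum lower bound gives the claim.

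The main obstacle is the precise constant in the $L^\infty$ bound on $h$: generic Fradelizi/Bobkov estimates only guarantee $\|h\|_\infty\leq C/\sigma$ for some universal $C$, and obtaining exactly $C=1$ with the paper's isotropic convention requires some care. If the constant is worse, the cleanest fix is to absorb it into the universal multiplicative factors appearing in the choice of the smoothing radius $c$ and in the constants of Lemma~\ref{lem:recFCP}, which only affects the polylogarithmic and constant factors of Theorem~\ref{thm:cut} and not the scaling in $n$ and $d$. Beyond this constant, the proof is driven entirely by Grünbaum and a one-dimensional log-concavity argument, so I expect no further obstacles.
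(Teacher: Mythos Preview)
The paper does not actually prove this proposition: it is introduced with the phrase ``We first remind the following classical results of convex geometry'' and stated without proof, alongside the KLS containment lemma. So there is no paper proof to compare against; you are supplying an argument where the paper simply invokes a known fact.

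Your route is the standard one and is correct. The decomposition into Gr\"unbaum's centroid inequality (which yields the $1/e$ term) plus a slab estimate governed by the one-dimensional marginal is exactly how such statements are usually proved. Your only self-flagged worry, the constant in $\|h\|_\infty$, is in fact not an issue: for a log-concave density $h$ on $\mathbb R$ with mean $0$ and variance $1$ one has the sharp bound $\|h\|_\infty\leq 1$, with equality attained by the (shifted) one-sided exponential. This is a classical one-dimensional fact (it appears, for instance, in Bobkov's work on log-concave measures and in the convex-geometry literature around Hensley/Ball/Fradelizi). Hence the slab contributes at most $t\,\mathrm{vol}(\mathcal C)\leq \|z\|_2\,\mathrm{vol}(\mathcal C)$ and no adjustment of the downstream constants is required.
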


An immediate corollary of the last proposition is as follows
\begin{corollary}[Approximate barycentric cutting of a convex]\label{cor:approxiso}
    Let $\mathcal K$ be a convex. It holds for any $u \in \mathbb R^d: u\neq 0$, and any $z \in \mathbb R^d$
    $$\mathrm{vol}\Bigg(\mathcal K \cap F_{\mathcal X}^{-1}\Big(\{w: \langle w-z, u\rangle \geq 0\}\Big)\Bigg) \geq (1/e - \sqrt{d}\|z\|_2)\mathrm{vol}(\mathcal K).$$
\end{corollary}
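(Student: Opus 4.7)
The plan is to reduce the statement to Proposition~\ref{prop:approxiso} by a change of variables: apply $F_{\mathcal X}$ (composed with a further scaling by $\sqrt{d}$) so as to put the relevant convex into genuinely isotropic position. In the paper's intended usage $\mathcal K$ plays the role of $\mathcal X$, so that by the renormalisation built into the definition of $F_{\mathcal X}$ the image $F_{\mathcal X}(\mathcal K)=\mathcal Z^{\mathcal X}$ has mean zero and covariance $\tfrac{1}{d}I_d$; consequently $\mathcal C:=\sqrt{d}\,F_{\mathcal X}(\mathcal K)$ has mean zero and covariance $I_d$, i.e.\ lies in isotropic position in the sense of Proposition~\ref{prop:approxiso}.

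The first step is to observe that $F_{\mathcal X}$ and the further scaling $w\mapsto\sqrt{d}\,w$ are affine bijections and therefore preserve volume ratios, so that writing $H_{z,u}=\{w:\langle w-z,u\rangle\ge 0\}$,
$$\frac{\mathrm{vol}\bigl(\mathcal K\cap F_{\mathcal X}^{-1}(H_{z,u})\bigr)}{\mathrm{vol}(\mathcal K)}=\frac{\mathrm{vol}\bigl(F_{\mathcal X}(\mathcal K)\cap H_{z,u}\bigr)}{\mathrm{vol}(F_{\mathcal X}(\mathcal K))}=\frac{\mathrm{vol}\bigl(\mathcal C\cap \sqrt{d}\,H_{z,u}\bigr)}{\mathrm{vol}(\mathcal C)}.$$
The second step is to rewrite $\sqrt{d}\,H_{z,u}$ in the new coordinate $y=\sqrt{d}\,w$: the defining inequality $\langle w-z,u\rangle\ge 0$ becomes $\langle y-\sqrt{d}\,z,u\rangle\ge 0$, so $\sqrt{d}\,H_{z,u}=\{y:\langle y-\sqrt{d}\,z,u\rangle\ge 0\}$ is a halfspace with the same normal $u$ but anchored at the shifted point $\sqrt{d}\,z$.

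Finally, I would invoke Proposition~\ref{prop:approxiso} applied to the isotropic convex $\mathcal C$ with the shifted anchor $\sqrt{d}\,z$, obtaining the lower bound $1/e-\|\sqrt{d}\,z\|_2=1/e-\sqrt{d}\,\|z\|_2$ on the common value of the three ratios above; multiplying by $\mathrm{vol}(\mathcal K)$ yields the claim. There is no real obstacle here, and no new probabilistic or analytic ingredient is needed beyond the proposition itself: the only point demanding care is to carry through the scaling factor $1/\sqrt{d}$ from the isotropy renormalisation of $F_{\mathcal X}$ consistently, as it is precisely this factor that converts the bare $\|z\|_2$ appearing in the isotropic statement into the $\sqrt{d}\,\|z\|_2$ appearing in the corollary.
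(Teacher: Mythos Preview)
Your proposal is correct and follows essentially the same approach as the paper's own proof: both reduce to Proposition~\ref{prop:approxiso} by noting that $F_{\mathcal X}(\mathcal K)$ is isotropic up to the $d^{-1/2}$ scaling, that affine bijections preserve volume ratios, and that the rescaling by $\sqrt d$ turns the anchor $z$ into $\sqrt d\,z$, producing the factor $\sqrt d\,\|z\|_2$ in the bound. Your write-up is in fact more explicit than the paper's, which compresses these steps into a two-line remark; you also correctly flag that the corollary as stated only makes sense when $\mathcal K$ is the set $\mathcal X$ used to define $F_{\mathcal X}$ (otherwise $F_{\mathcal X}(\mathcal K)$ need not be isotropic at all), which is indeed the only case in which it is invoked.
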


From Proposition~\ref{lem:KLS} we deduce that
$$\mathbb B_2(2 \Mc c) \subset \mathcal Z^{\mathcal X}.$$
We therefore know that Proposition~\ref{prop:cut} holds for the output $(z, \hat g_z)$ of $\FCP(0, \mathcal X, c, \Nc,0, \delta)$ - and write $\xi$ for the event of probability larger than $1-4\delta$ where the proposition holds. Note already that it implies by definition of the algorithm that on $\xi$
$$|f(\hat x') - \hat f'| \leq \et(\Mc / \delta)/\sqrt{\Nc}, $$ 
and also that on $\xi$
$$\Nc + \NF \leq T_{\FCP} \leq 4\Mc \Nc + \NF,$$
and also that $\mathcal X'\subset \mathcal X$ is convex. Note also that on $\xi$ it implies by definition if $c$ that
$$\|z\|_2 \leq 2 \Mc c = 1/(4e\sqrt{d}),$$
which implies by Corollary~\ref{cor:approxiso}, by construction of the algorithm that on $\xi$
    $$\mathrm{vol}(\mathcal X \setminus \mathcal X') \geq \frac{1}{2e}\mathrm{vol}(\mathcal X),$$
    namely that on $\xi$
    $$\mathrm{vol}( \mathcal X') \geq (1 - \frac{1}{2e})\mathrm{vol}(\mathcal X).$$

\paragraph{Case 1: $g(z)$ is small, or $f(\hat x)$ is small.} We first consider the case where either $f(\hat x) - f^* \leq B$, or on $\xi$, we have that $g(z) - f^* \leq B$. In this case, we will have by definition of the algorithm that on $\xi$:
$$ f (\hat x') - f^* \leq B,$$
as $B \geq 2^{16} \et(\Mc/\delta)\log(2\Mc/\delta) \frac{1}{\sqrt{\Nc}} \geq  8\et(\Mc/\delta)\frac{1}{\sqrt{\Nc}}$. This concludes the proof.

\paragraph{Case 2: $g(z)$ and $f(\hat x) $ are large.} We now consider the converse case on $\xi$. In this case, we know by Proposition~\ref{prop:cut} that on $\xi$: 
$$g(z) - f^* \geq B.$$
In this case, we know by Proposition~\ref{prop:cut} that on $\xi$
$$g_{2c}(z) - g_c(z) \leq 2^{-3}(g_c(z) - f^*),$$
and we also know by assumption that
$$\mathcal X^* \subset \mathcal X.$$
So that by definition of $\mathcal X^*$, and since $\Nc \leq n$, for any $\tilde x \in \mathcal X^*$, on $\xi$
$$g_{2c}(z) - g_c(z) \leq 2^{-2}(g_c(z) - g(F_{\mathcal X}(\tilde x))).$$
We can therefore apply Lemma~\ref{lem:geom}, and we have that on $\xi$ 
    \begin{equation}\label{eq:devtomin}
        \langle \nabla g_c(z), z - F_{\mathcal X}(\tilde x) \rangle \geq \frac{3}{4}[g_c(z)) - g(F_{\mathcal X}(\tilde x))] \geq \frac{5}{8}[g_c(z)) -f^*] \geq \frac{5}{8} B,
    \end{equation}
    as $B \geq 8/n$ and $g(F_{\mathcal X}(\tilde x)) - f^* \leq 1/n$. 

Also, by Lemma~\ref{lem:conc}, for any $u \in \mathbb R^d$, conditional to $\xi$ and on an event $\xi'$ of probability larger than $1-\delta$
$$|\langle \widehat{ \nabla g_c} - \nabla g_c(z), u\rangle|  \leq \et(1/\delta) \|u\|_2  \frac{\sqrt{d}}{c\sqrt{\NF}}.$$
So that on $\xi'\cap \xi$, for any $\tilde x \in \mathcal X^*$
$$|\langle \widehat{ \nabla g_c} - \nabla g_c(z), z - F_{\mathcal X}(\tilde x)\rangle|  \leq \et(2d/\delta) \|z - F_{\mathcal X}(\tilde x)\|_2  \frac{\sqrt{d}}{c\sqrt{\NF}}.$$
From Lemma~\ref{lem:KLS} this implies on $\xi'\cap \xi$
$$|\langle \widehat{ \nabla g_c} - \nabla g_c(z), z - F_{\mathcal X}(\tilde x)\rangle|  \leq 2\et(2d/\delta)  \frac{d}{c\sqrt{\NF}} \leq B/16,$$
as $B \geq  32\et(2d/\delta)  \frac{d}{c\sqrt{\NF}} $.

Combining this result with Equation~\eqref{eq:devtomin} leads to the fact that on $\xi'\cap \xi$, for any $\tilde x \in \mathcal X^*$
 \begin{equation*}
        \langle \widehat{ \nabla g_c}, z - F_{\mathcal X}(\tilde x) \rangle \geq \frac{1}{16} B.
    \end{equation*}
    So that on $\xi'\cap \xi$, we have that $\mathcal X^* \subset \mathcal X'$. This concludes the proof.


\begin{proof}[Proof of Lemma~\ref{lem:inducFCP}]

By Lemma~\ref{lem:conc}, it holds on an event of probability larger than $1 - \delta (1 + \sum_k 1/k^2) \geq 1 - 2.5\delta$ that
$$|g( z)  - \hat g_z| \leq \et(1/\delta)/\sqrt{N},$$
and for any $i \leq I_N$, $j \leq 2^i$
$$|g(z_j^{(i)})  - \hat g_{z_j^{(i)}}| \leq \et(2^i i^2/\delta)/\sqrt{N}.$$
Write $\xi$ for this event. 

Note that if $g(z) - f^* \leq \frac{1}{N}$ and $s=0$, then on $\xi$ we have that if there exists $z_j^i$ such that 
    $$\hat g_{z_j^{(i)}}  - \hat g_z \geq  \frac{1}{16N}+4\et(2^i i^2/\delta) \sqrt{\frac{i^2 2^i}{N}},$$
    then since $2\et(2^i i^2/\delta) \sqrt{\frac{i^2 2^i}{N}} \geq \frac{(17/16)}{N}$, we have
    $$g(z_j^{(i)}) - f^*\geq  \frac{(17/16)}{N}.$$
    The first part of the lemma is therefore proven.

Assume now that $g(z) - f^* \geq \frac{(17/16)^s}{N}$. Note first that on $\xi$, we have that if there exists $z_j^i$ such that 
    $$\hat g_{z_j^{(i)}}  - \hat g_z \geq  \frac{(17/16)^s}{16N}+4\et(2^i i^2/\delta) \sqrt{\frac{i^2 2^i}{N}},$$
  then since $g(z) - f^* \geq \frac{(17/16)^s}{N}$
    $$g(z_j^{(i)}) - f^*\geq  \frac{(17/16)^{s+1}}{N}.$$
    The second part of the lemma is therefore proven. 
    
    Now assume that $z$ satisfies the conditions of the third part of the lemma, namely $g_c(2z) - g_c(z) \geq 2^{-3}(g_c(z) - f^*)$, and $g(z) - f^* > \frac{(17/16)^s}{N} \lor \Big[ 2^{15} \et(1/\delta)\log(2/\delta) \frac{1}{\sqrt{N}}\Big]$.

\paragraph{Step 1: establishing condition under which at least a $z_j^{(i)}$ is selected.} 

On $\xi$ it holds that
$$|g(z_j^{(i)}) - g(z)| \leq 2\et(2^i i^2/\delta) \sqrt{\frac{i^2 2^i}{N}}.$$
So if
\begin{equation}\label{eq:slec}
    g(z_j^{(i)}) - g(z) \geq \frac{(17/16)^s}{16N} + 6\et(2^i i^2/\delta) \sqrt{\frac{i^2 2^i}{N}}:= \Delta_i,
\end{equation}
then on $\xi$ it can be selected as it satisfies
$$\hat g_{z_j^{(i)}} - \hat g_z \geq \frac{(17/16)^s}{16N} + 4\et(2^i i^2/\delta) \sqrt{\frac{i^2 2^i}{N}}.$$

We now recall the following concentration result.
\begin{lemma}[Concentration of Binomial random variables] \label{lem:bin}\alex{tocite}
    Let $p\in [0,1]$ and $m\geq 1$. Let $X_1, \ldots, X_m \sim_{i.i.d.} \mathcal B(p)$. Then with probability larger than $1-\delta$
    $$|\frac{1}{m} \sum_{i=1}^m X_i - p| \leq \sqrt{2p\frac{\log(2/\delta)}{m}} + 2\frac{\log(2/\delta)}{m},$$
    which implies in particular that with probability larger than $1-\delta$
    $$\frac{p}{2} - 4\frac{\log(2/\delta)}{m} \leq \frac{1}{m} \sum_{i=1}^m X_i  \leq 2p + 2\frac{\log(2/\delta)}{m}.$$
\end{lemma}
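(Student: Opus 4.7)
The plan is to treat Lemma~\ref{lem:bin} as a standard consequence of Bernstein's inequality for bounded random variables, followed by an AM-GM rearrangement to get the simpler second display.

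First, I would verify that the Bernstein hypotheses are in place: the centered variables $Y_i = X_i - p$ satisfy $|Y_i|\le 1$ almost surely and $\mathbb{E}[Y_i^2] = p(1-p) \le p$. Applying the one-sided Bernstein inequality to $\sum Y_i$ and to $-\sum Y_i$, and using the union bound, yields, for every $t \ge 0$,
\[
\mathbb{P}\!\left(\left|\tfrac{1}{m}\sum_{i=1}^m X_i - p\right| \ge t\right) \;\le\; 2\exp\!\left(-\frac{m t^2}{2p + \tfrac{2}{3}t}\right).
\]
Setting the right-hand side equal to $\delta$ gives the quadratic condition $m t^2 \le 2\log(2/\delta)\,p + \tfrac{2}{3}\log(2/\delta)\,t$. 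I would then invert this by the standard trick: if $t^2 \le A + Bt$ with $A,B\ge 0$, then $t \le \sqrt{A} + B$. Taking $A = 2p\log(2/\delta)/m$ and $B = 2\log(2/\delta)/(3m) \le 2\log(2/\delta)/m$ produces the first inequality
\[
\left|\tfrac{1}{m}\sum_{i=1}^m X_i - p\right| \;\le\; \sqrt{\tfrac{2p\log(2/\delta)}{m}} + \tfrac{2\log(2/\delta)}{m},
\]
with probability at least $1-\delta$.

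For the second display, I would apply AM-GM to the square-root term: writing $\sqrt{2p\log(2/\delta)/m} = \sqrt{p \cdot 2\log(2/\delta)/m} \le \tfrac{1}{2}\bigl(p + 2\log(2/\delta)/m\bigr) = p/2 + \log(2/\delta)/m$. Plugging this into the first inequality yields
\[
-\tfrac{p}{2} - \tfrac{3\log(2/\delta)}{m} \;\le\; \tfrac{1}{m}\sum_{i=1}^m X_i - p \;\le\; \tfrac{p}{2} + \tfrac{3\log(2/\delta)}{m},
\]
which is strictly stronger than the stated two-sided bound $p/2 - 4\log(2/\delta)/m \le \bar X \le 2p + 2\log(2/\delta)/m$; in particular the stated looser form follows immediately after a slight relaxation of constants.

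The main (and only) obstacle here is the algebraic inversion of the Bernstein tail bound, which as noted is a routine quadratic manipulation; everything else is a black-box invocation of the Bernstein inequality together with AM-GM. Since the statement is labelled \textbf{tocite} in the text, no originality is expected, and the above recipe mirrors the textbook treatment (see e.g.~\cite{boucheron2013concentration}).
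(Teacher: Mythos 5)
Your overall route is the right one, and in fact the paper offers no proof of this lemma at all (it is stated with a ``to cite'' marker and used as a standard fact), so a Bernstein-based derivation is exactly what is expected: the centring, the variance bound $p(1-p)\le p$, the tail bound $2\exp\bigl(-mt^2/(2p+\tfrac23 t)\bigr)$, and the quadratic inversion $t^2\le A+Bt\Rightarrow t\le\sqrt A+B$ are all correct and give the first display with room to spare. The lower half of the second display (the only half actually invoked later, in the proof of Lemma~\ref{lem:inducFCP}) also follows exactly as you say: $\bar X\ge p-\sqrt{2pL/m}-2L/m\ge p/2-3L/m\ge p/2-4L/m$ with $L=\log(2/\delta)$.

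The one genuine slip is the claim that your bound $\bar X-p\le p/2+3L/m$ is ``strictly stronger'' than the stated upper bound $\bar X\le 2p+2L/m$: it is not, since $\tfrac32 p+3L/m\le 2p+2L/m$ requires $L/m\le p/2$, which fails precisely in the small-$p$ regime the lemma is designed for. Moreover, once you have relaxed Bernstein's $\tfrac23 L/m$ term to $2L/m$, no choice of weighted AM--GM recovers the constant $2$ in front of $L/m$ on the upper side (you need the coefficient of $p$ to stay at most $2$, which forces the $L/m$ coefficient above $2$). The fix is one line: keep the unrelaxed form $\bar X\le p+\sqrt{2pL/m}+\tfrac23 L/m$ and apply the weighted AM--GM $\sqrt{2pL/m}\le p+\tfrac{L}{2m}$, giving $\bar X\le 2p+\tfrac{7}{6}L/m\le 2p+2L/m$; alternatively, prove the upper tail directly by a multiplicative Chernoff bound. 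With that adjustment the proposal is complete.
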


Assume that there exists $i \leq I_N$ such that
$$\mathbb P_{Z\sim \mathcal U_{\mathbb B_2(2c)}}(g(z+Z) - g(z) \geq \Delta_i) > 8\frac{\log(2/\delta)}{2^i}.$$
By Lemma~\ref{lem:bin}, then we know that with probability larger than $1-\delta$, at least one of the $z_j^{(i)}$ for some $j$ will be such that 
$$g(z_j^{(i)}) - g(z) \geq \Delta_i.$$
Using Equation~\eqref{eq:slec}, we therefore know that in this case, with probability larger than $1-4\delta$, $z_j^{(i)}$ will be selected, finishing the proof in this case.

\paragraph{Step 2: Converse case where for any $i \leq I_N$, we have $\mathbb P_{Z\sim \mathcal U_{\mathbb B_2(2c)}}(g(z+Z) - g(z) \geq \Delta_i) \leq  8\frac{\log(2/\delta)}{2^i}$.}

We remind that
$$\Delta_i = \frac{(17/16)^s}{8N} + 6\et(2^i i^2/\delta) \sqrt{\frac{i^2 2^i}{N}}.$$
Note that by assumption, we therefore have that for any $i \leq I_N$, we have 
$$\mathbb P_{Z\sim \mathcal U_{\mathbb B_2(2c)}}\Bigg(g(z+Z) -  g(z) - \frac{(17/16)^s}{16N}  \geq 6\et(2^i i^2/\delta) \sqrt{\frac{i^2 2^i}{N}}\Bigg) \leq  8\frac{\log(2/\delta)}{2^i}.$$
So that, since $4\et(2^{I_N} I_N^2/\delta) \sqrt{\frac{I_N^2 2^{I_N}}{N}} \geq 2$ by definition of $I_N$
$$\mathbb E_{Z\sim \mathcal U_{\mathbb B_2(2c)}}\Bigg[g(z+Z) -  g(z) - \frac{(17/16)^s}{16N} \Bigg] \leq \sum_{i \leq I_N} 64\et(2^{i+1} (i+1)^2/\delta) \sqrt{\frac{(i+1)^2 2^{i+1}}{N}} \times  \frac{\log(2/\delta)}{2^i},$$
leading to
\begin{align*}
g_{2c}(z) -  g(z) - \frac{(17/16)^s}{16N} &\leq 64\sqrt{2} \et(1/\delta)\log(2/\delta) \frac{1}{\sqrt{N}} \sum_{i \leq I_N} (i+1)^4 2^{-i/2}   \\
&\leq 2^{11} \et(1/\delta)\log(2/\delta) \frac{1}{\sqrt{N}}.
\end{align*}
Since $g(z) - f^* \geq \frac{(17/16)^s}{N}$, and $g(z) - f^* > 2^{15} \et(1/\delta)\log(2/\delta) \frac{1}{\sqrt{N}}$ by assumption, then we have that
\begin{align*}
g_{2c}(z) - f^*
&< (g(z) - f^*)(1+2^{-3}).
\end{align*}
This implies
$$g_{2c}(z) - g_c(z) < 2^{-3}(g(z) - f^*) \leq 2^{-3}(g_c(z) - f^*).$$
This contradicts our assumption that $g_{2c}(z) - g_c(z) \geq 2^{-3}(g_c(z) - f^*)$, so that this case cannot happen under our assumption. This concludes the proof.



\end{proof}

\subsection{Proof of technical lemmas}\label{lem:tech}

\begin{proof}[Proof of Lemma~\ref{lem:geom}]
Assume without loss of generality that $\tilde z = 0$ and $g(\tilde z) = 0$. Let $c>0$ and $z \in \mathbb R^d$ 
such that $g_{2c} (z) - g_c(z) \leq 2^{-2} g_c(z)$. In order to prove the lemma, it suffices to prove that $\langle \nabla g_c(z), z \rangle \geq 3g_c(z))/4$.

    By convexity of $g$ on $\mathbb R^d$, note that for any $z'\in \mathbb R^d$, we have $g(z'/2) \leq g(z')/2$. So that by definition of $g_c$:
    $$g_{c}(z'/2) \leq g_{2c}(z')/2.$$
    
    Since $g_{2c} (z)  \leq (5/4) g_c(z)$, we have applying the formula above to $z$
    $$g_{c}(z/2) \leq g_{2c}(z)/2 \leq 5g_{c}(z)/8,$$
    so that
    \begin{equation}\label{eq:geom1}
        g_c(z) - g_{c}(z/2) \geq 3g_{c}(z)/8.
    \end{equation}

    Since $g_c$ is convex and differentiable on $\mathbb R^d$
    $$g_c(z) - g_c(z/2) \leq \langle \nabla g_c(z), z/2 \rangle.$$
    So that finally by Equation~\eqref{eq:geom1}:
        $$3g_{c}(z)/4 \leq \langle \nabla g_c(z), z \rangle.$$
\end{proof}

\begin{proof}[Proof of Lemma~\ref{lem:conc}]

\noindent
\textbf{Bound on the deviations of $\hat g_c(z)$.} Let $\delta >0$. Note that $\hat g_c(z)$ is the empirical mean of the $\tilde y_i^{(b)}$, which are by construction i.i.d.~random variables such that $\tilde y_i^{(b)}\in [0,1]$ and $\mathbb E[\tilde y_i^{(b)}] = g_c(z)$. So that, applying Hoeffding's inequality (see e.g.~\cite[Theorem 2.8]{boucheron2003concentration}), with probability larger than $1-\delta$
$$|\hat g_c(z) - g_c(z)|  \leq \sqrt{\frac{\log(2/\delta)}{2N}},$$
leading to the result.

\noindent
\textbf{Bound on the deviations of $\langle \widehat{ \nabla g_c}(z)$.} Let $\delta >0$. Note now that $\mathbb E\langle \widehat{ \nabla g_c}(z) , u\rangle$ is the empirical of the i.i.d.~random variables $W_i := \frac{d}{c^2} \tilde y_i^{(s)} \langle Z_i^{(s)},  u \rangle$. Note that by Equation~\eqref{eq:nableq}, we have
$$\mathbb E W_i = \langle \nabla g_c(z), u\rangle,$$
and
\begin{align*}
    \mathbb E W_i^2 &= \frac{d^2}{c^4} \mathbb E [(\tilde y_i^{(s)})^2 \langle Z_i^{(s)},  u \rangle^2]\\
    &\leq \frac{d^2}{c^4} \mathbb E [\langle Z_i^{(s)},  u \rangle^2]\\
    &= \frac{d^2}{c^4} \frac{c^2}{d} \|u\|_2^2 = \frac{d}{c^2} \|u\|_2^2,
\end{align*}
and
\begin{align*}
    |W_i| &= \frac{d}{c^2} \tilde y_i^{(s)} |\langle Z_i^{(s)},  u \rangle|\\
    &\leq \frac{d}{c} \|u\|_2.
\end{align*}
So that, applying Bernstein's inequality (see e.g.~\cite[Theorem 2.10]{boucheron2003concentration}), with probability larger than $1-\delta$
$$||\langle \widehat{ \nabla g_c}(z) - \nabla g_c(z), u\rangle||  \leq \frac{\sqrt{d}}{c} \|u\|_2 \sqrt{2\frac{\log(2/\delta)}{N}} +  2\frac{d}{c} \|u\|_2\frac{\log(2/\delta)}{N}.$$
Since $N \geq d$, this leads to the result.
\end{proof}

\begin{proof}[Proof of Lemma~\ref{lem:vol}]
We know that $\mathbb B_1(x^*, r) \in \bar{\mathcal X}$. So that for any $i \leq d$, we have
$$f(x^*+re_i) \leq 1 , f(x^*-re_i) \leq 1,$$
which implies by convexity of $f$ and $\bar{\mathcal X}$ that
$$f(x^*+re_i/n) \leq 1/n, f(x^*-re_i/n) \leq 1/n,$$
and these points are in $\bar{\mathcal X}$.
Note that
$$\mathrm{vol}(\mathbb B_1(x^*, r/n)) \geq (r/(nd))^d.$$
Note also that
$$\mathrm{vol}(\bar{\mathcal X}) \leq \mathrm{diam}(\bar{\mathcal X})^d.$$
So that since $\mathbb B_1(x^*, r/n) = \mathrm{conv}(\mathcal X^*)$
$$\frac{\mathrm{vol}(\mathrm{conv}(\mathcal X^*)}{\mathrm{vol}(\bar{\mathcal X})} \geq \Big[\frac{r}{nd\mathrm{diam}(\bar{\mathcal X}}\Big]^d.$$
\end{proof}

\begin{proof}[Proof of Corollary~\ref{cor:approxiso}]
    The corollary follows from Proposition~\ref{prop:approxiso} using the facts that $F_{\mathcal X}(\mathcal K)$ is in istropic position rescaled by $d^{-1/2}$ and also that since $F_{\mathcal X}^{-1}$ is a linear application of the form $F_{\mathcal X}^{-1}(z) = \sqrt{d} \Sigma_{\mathcal X}^{1/2} (z+ \mu_{\mathcal X})$, then for any convex $\mathcal K'$
    $$\mathrm{vol}(F_{\mathcal X}^{-1}(\mathcal K')) = d^{d/2} \mathrm{det}(\Sigma_{\mathcal X})^{1/2}\mathrm{vol}(\mathcal K'),$$
    where $\mathrm{det}(\Sigma_{\mathcal X})$ is the determinant of $\Sigma_{\mathcal X}$.
\end{proof}

\end{document}